\def\Rp{{\mathbb R^+}}
\def\Zp{{\mathbb Z^+}}
\def\R{{\mathbb R}}
\def\P{{\mathbb P}}
\def\E{{\mathbb E}}
\def\I{{\mathbb I}}
\newtheorem{theorem}{Theorem}
\newtheorem{lemma}{Lemma}
\newcommand\mysection[1]{
              \refstepcounter{section}
              \section*{\centering\normalsize\bf\thesection.~#1}
                         }
\begin{document}
\section*{\centering\large\bf
           Tail behaviour of stationary distribution\\
           for Markov chains with asymptotically
           zero drift\footnote{Supported by DFG}}

\section*{\centering\normalsize\slshape\bfseries
Denis Denisov\footnote{School of Mathematics,
  Cardiff University, Senghennydd Road CARDIFF,
  Wales, UK.  CF24 4AG Cardiff. E-mail: DenisovD@cf.ac.uk},
Dmitry Korshunov\footnote{Sobolev Institute of Mathematics,
630090 Novosibirsk, Russia.
E-mail: korshunov@math.nsc.ru} and
Vitali Wachtel\footnote{Mathematical Institute,
  University of Munich,
  Theresienstrasse 39, D--80333 Munich, Germany
E-mail: wachtel@mathematik.uni-muenchen.de}}

\begin{abstract}
We consider a Markov chain on ${\mathbb R}^+$
with asymptotically zero drift and finite second
moments of jumps which is positive recurrent.
A power-like asymptotic behaviour of the invariant
tail distribution is proven; such a heavy-tailed invariant
measure happens even if the jumps of the chain are bounded.
Our analysis is based on test functions technique
and on construction of a harmonic function.

{\it Keywords}: Markov chain,
invariant distribution,
Lamperti problem,
asymptotically zero drift,
test (Lyapunov) function,
regularly varying tail behaviour,
convergence to $\Gamma$-distribution,
renewal function,
harmonic function

{\it AMS subject classification}:
Primary 60J05, 60F10; Secondary 60F15
\end{abstract}

\mysection{Introduction, main results and discussion}
\label{sec:introduction}

Let $X=\{X_n, n\ge0\}$ be a time homogeneous Markov chain
taking values in $\Rp$. Denote by $\xi(x)$,
$x\in{\mathbb R}^+$, a random variable corresponding
to the jump of the chain at point $x$, that is,
a random variable with distribution
\begin{eqnarray*}
{\mathbb P}\{\xi(x)\in B\}
&=& {\mathbb P}\{X_{n+1}-X_n\in B\mid X_n=x\}\\
&=& {\mathbb P}_x\{X_{1}\in x+B\},
\quad B\in{\mathcal B}({\mathbb R});
\end{eqnarray*}
hereinafter the subscript $x$ denotes the initial position
of the Markov chain $X$, that is, $X_0=x$.

Denote the $k$th moment of the jump at point $x$
by $m_k(x):=\E\xi^k(x)$.
We say that a Markov chain has {\it asymptotically zero drift}
if $m_1(x)=\E\xi(x)\to 0$ as $x\to \infty$.
The study of processes with asymptotically zero drift was initiated by Lamperti
in a series of papers \cite{Lamp60, Lamp62,Lamp63}.

Processes with asymptotically zero drift naturally appear
in various stochastic models, here we mention only some
of them: branching processes, Klebaner \cite{Kleb84} and
K\"uster \cite{Kus85}; random billiards, Menshikov et al. \cite{MVW08};
random polymers, Alexander \cite{Alex11}, Alexander and Zygouras \cite{AZ09},
De Coninck et al. \cite{DDH08}.

We assume that the Markov chain $X_n$ possesses a stationary
(invariant) distribution and denote this distribution by $\pi$.
If we consider an irreducible aperiodic Markov chain on $\Zp$,
then existence of probabilistic invariant distribution
is equivalent to finiteness of $\E_0\tau_0$ where
$\tau_0:=\min\{n\ge 1:X_n=0\}$. For the state space $\Rp$,
we assume that $X_n$ is a positive Harris recurrent and
strongly aperiodic chain, see related definitions in \cite{MT}.
In particular, there exists a sufficiently large $x_0$
such that
\begin{eqnarray}\label{def.B}
\E_x\tau_B &<& \infty\ \mbox{ for all }x>x_0,
\end{eqnarray}
where $\tau_B:=\min\{n\geq1: X_n\in B\}$ and $B:=[0,x_0]$. We assume that the chain makes excertions
from any compact set, in the following sense.
We suppose that, for every fixed $x_1>x_0$,
there exists an $\varepsilon=\varepsilon(x_1)>0$ such that,
for every $x>x_0$,
\begin{eqnarray}\label{exersions}
\P_x\{X_{n(x)}>x_1,\tau_B>n(x)\} &\ge& \varepsilon\ \mbox{ for some }n(x).
\end{eqnarray}
We consider the case where $\pi$ has unbounded support, that is, $\pi(x,\infty)>0$
for every $x$. Our main goal is to describe
the asymptotic behaviour of its tail, $\pi(x,\infty)$,
for a class of Markov chains with asymptotically zero drift.

As it was shown in \cite[Theorem 1]{Kor11} any Markov chain with
asymptotically zero drift has heavy-tailed invariant
distribution provided
$$
\liminf_{x\to\infty}\ {\mathbb E}\{\xi^2(x);\xi(x)>0\} > 0;
$$
that is, all positive exponential moments of the invariant
distribution are infinite. The present paper is devoted
to the precise asymptotic behaviour of the invariant
tail distribution in the critical case where $m(x)$ behaves
like $-c/x$ for large $x$.
The existence of invariant distribution in critical case was
studied by Lamperti \cite{Lamp63}; this study is
based on considering the test function $V(x)=x^2$.
Then the drift of $V$ at point $x$ is equal to
${\mathbb E}\{V(X_{n+1})-V(X_n)\mid X_n=x\}=2xm_1(x)+m_2(x)$
and if $2xm(x)+b(x)<-\varepsilon$ for all sufficiently
large $x$, then the chain is positive recurrent and,
under mild technical conditions, it has unique invariant distribution
(see \cite[Chapter 11]{MT}).

There are two types of Markov chains for which the invariant
distribution is explicitly calculable. Both are related to
skip-free processes, either on lattice or on continious
state space $\Rp$.

The first case where the stationary distribution is explicitly
known is diffusion processes on $\Rp$ (slotted in time if we
need just a Markov chain). Let $m_1(x)$ and $m_2(x)$ be the
drift and diffusion coefficients at state $x$, respectively.
In the case of stable diffusion, the invariant density function
$p(x)$ solves the Kolmogorov forward equation
\begin{eqnarray*}
0 &=& -\frac{d}{dx}(m_1(x)p(x))+\frac12\frac{d^2}{dx^2}(m_2(x)p(x)),
\end{eqnarray*}
which has the following solution:
\begin{eqnarray}\label{ex:1}
p(x) &=& \frac{2c}{m_2(x)}e^{\int_0^x\frac{2m_1(y)}{m_2(y)}dy},
\qquad c>0.
\end{eqnarray}

The second case is the Markov chain on $\Zp$ with
$\xi(x)$ taking values $-1$, $1$ and $0$ only,
with probabilities $p_-(x)$, $p_+(x)$ and $1-p_-(x)-p_+(x)$
respectively, $p_-(0)=0$. Then the stationary probabilities
$\pi(x)$, $x\in{\mathbb Z^+}$, satisfy the equations
$$
\pi(x)=\pi(x-1)p_+(x-1)+\pi(x)(1-p_+(x)-p_-(x))+\pi(x+1)p_-(x+1),
$$
which have the following solution:
\begin{eqnarray}\label{ex:2}
\pi(x) &=& \pi(0)\prod_{k=1}^x\frac{p_+(k-1)}{p_-(k)}
= \pi(0)e^{\sum_{k=1}^x(\log p_+(k-1)-\log p_-(k))},
\end{eqnarray}
where under some regularity conditions the sum may be
approximated by the integral like in the diffusion case.

In paper \cite{MP95}, Menshikov and Popov investigated
behaviour of the invariant distribution
$\{\pi(x),x\in{\mathbb Z}^+\}$ for countable Markov chains
with asymptotically zero drift and with bounded jumps
(see also Aspandiiarov and Iasnogorodski \cite{AI99}).
Some rough theorems for the local probabilities $\pi(x)$
were proved; if
\begin{equation}
\label{moments}
m_1(x)\sim-\frac{\mu}{x},\quad m_2(x)\sim b
\quad\text{and}\quad 2\mu>b
\end{equation}
then for every $\varepsilon>0$ there exist
constants $c_-=c_-(\varepsilon)>0$ and
$c_+=c_+(\varepsilon)<\infty$ such that
$$
c_-x^{-2\mu/b-\varepsilon}\le\pi(x)
\le c_+x^{-2\mu/b+\varepsilon}.
$$

The paper \cite{Kor11} is devoted to the existence
and non-existence of moments of invariant distribution.
In particular, there was proven that if (\ref{moments}) holds and
the families of random variables
$\{(\xi^+(x))^{2+\gamma},x\ge0\}$ for some $\gamma>0$ and
$\{(\xi^-(x))^2,x\ge0\}$ are uniformly integrable
then the moment of order $\gamma$ of the
invariant distribution $\pi$ is finite if
$\gamma<2\mu/b-1$, and infinite if $\pi$ has unbounded
support and $\gamma>2\mu/b-1$. This result implies that
for every $\varepsilon>0$ there exists $c(\varepsilon)$
such that
\begin{equation}
\label{up.bound}
\pi(x,\infty)\leq c(\varepsilon) x^{-2\mu/b+1+\varepsilon}.
\end{equation}

To the best of our knowledge there are no results in the
literature on the exact asymptotic behaviour for the measure $\pi$.

\begin{theorem}\label{thm:tail}
Suppose that \eqref{moments} holds.
Suppose also that there exists a differentiable function
$r(x)>0$ such that $r'(x)\sim -\frac{2\mu}{bx^2}$ and
\begin{eqnarray}\label{r-cond.2}
\frac{2m_1(x)}{m_2(x)} &=& -r(x)+O(1/x^{2+\delta})
\end{eqnarray}
for some $\delta>0$. Suppose also that
\begin{eqnarray}\label{moment_cond1}
\sup_x\E|\xi(x)|^{3+\delta} &<& \infty,
\end{eqnarray}
\begin{equation}\label{3.moment}
\E\xi^3(x)\to m_3\in(-\infty,\infty)
\end{equation}
and, for some $A<\infty$,
\begin{eqnarray}\label{cond.xi.U+2}
\E\{\xi^{2\mu/b+3+\delta}(x);\xi(x)>Ax\} &=& O(x^{2\mu/b}).
\end{eqnarray}
Then there exist a constant $c>0$ such that
$$
\pi(x,\infty)\sim c x e^{-\int_0^x r(y)dy}= c x^{-2\mu/b+1}\ell(x)
\quad\mbox{ as }x\to\infty,
$$
where
$\ell(x):=x^{2\mu/b}/e^{\int_0^x r(y)dy}$ is a slowly varying function.
\end{theorem}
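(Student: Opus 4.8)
The plan is to exploit the fact that the invariant measure $\pi$ must be \emph{harmonic} for the transition operator: $\pi(dy) = \int \P_x\{X_1\in dy\}\,\pi(dx)$, and to compare $\pi$ against a carefully chosen harmonic-type function. Concretely, I would first construct a function $h(x)$ that is \emph{exactly} harmonic (or harmonic up to a negligible error) for the chain on a half-line $(x_1,\infty)$, i.e. $\E\{h(X_1)-h(x)\mid X_0=x\}=0$ for $x$ large, and guess its form from the diffusion heuristic \eqref{ex:1}: since the stationary density there is $\propto m_2(x)^{-1}e^{\int 2m_1/m_2}=m_2(x)^{-1}e^{-\int r}$, and integrating a density gives the tail, one expects $h(x)\asymp x e^{-\int_0^x r(y)\,dy}$. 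The first serious step is therefore to verify that $g(x):=x\,e^{-\int_0^x r(y)\,dy}$ satisfies, using a Taylor expansion of $g(x+\xi(x))$ to third order together with $m_1(x)\sim-\mu/x$, $m_2(x)\to b$, $\E\xi^3(x)\to m_3$, the hypotheses \eqref{r-cond.2}, \eqref{moment_cond1}, \eqref{3.moment} and the tail bound \eqref{cond.xi.U+2}, an estimate of the form $\E\{g(X_1)-g(x)\mid X_0=x\}=o(g(x)/x^{1+\delta'})$; the condition $r'\sim-2\mu/(bx^2)$ is exactly what makes the $2xm_1/m_2 + 1$ terms cancel in the exponent, and \eqref{cond.xi.U+2} controls the contribution of large jumps to the third-order term. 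One may need to modify $g$ on a finite interval and add a lower-order correction $g_1$ (solving a linear recursion) to turn the $o(\cdot)$ error into an exact harmonic equation, or alternatively to produce sub- and super-harmonic functions $g(1\pm\varepsilon h_\varepsilon)$ that sandwich the true harmonic function.

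The second step is to pass from the harmonic function to the tail of $\pi$. Here I would use the standard identity for positive recurrent chains: for a harmonic function $h\ge0$ on $(x_1,\infty)$ vanishing on $B=[0,x_1]$, the quantity $\sum_{x}\pi(x)\,h(x)$ (or its integral analogue) can be related, via the ratio-limit / renewal structure at the atom $B$, to $\E_\nu\bigl[\sum_{n<\tau_B}(\text{something})\bigr]$, and more usefully one gets a \emph{renewal-type equation} for the function $x\mapsto\E_x[h(X_{\tau_B})]$. The cleaner route, which the abstract hints at ("renewal function", "convergence to $\Gamma$-distribution"), is: after the harmonic change of measure $\widehat\P_x(dy)=\frac{h(y)}{h(x)}\P_x(dy)$, the chain $X$ on $(x_1,\infty)$ becomes transient and drifts to $+\infty$; under the scaling $X_n/\sqrt n$ it converges (Lamperti-type functional limit theorem, using $m_1\sim-\mu/x$, $m_2\to b$) to a Bessel-like diffusion whose hitting/occupation structure yields a $\Gamma$-limit, and the Green's function of the $\widehat{}$-chain has a precise asymptotic $\widehat G(x,y)\sim(\text{const})\cdot(\text{renewal density})$. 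Translating back through $h$ gives $\pi(x,\infty)\sim c\,h(x)$.

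The third step is to pin down that the limit constant $c$ is strictly positive and finite and that the two-sided bounds from the sandwiching harmonic functions actually coincide in the limit; this is where the uniform-integrability-type hypotheses \eqref{moment_cond1}, \eqref{3.moment} and especially the truncated-moment bound \eqref{cond.xi.U+2} earn their keep, since they guarantee that the contribution to $\E[h(X_1)]$ from jumps landing far out (where $h$ grows) is of smaller order, so that no mass "escapes to infinity" in the renewal/Green's-function computation. Combining, one obtains $\pi(x,\infty)\sim c\,x\,e^{-\int_0^x r(y)\,dy}$, and the identification $x^{-2\mu/b}e^{\int_0^x r(y)dy}\to$ (slowly varying) follows from $r(x)\sim 2\mu/(bx)$, i.e. $\int_0^x r = (2\mu/b)\log x + o(\log x)$, making $\ell$ slowly varying by the Karamata representation.

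The main obstacle I expect is Step 1 combined with the sandwiching in Step 3: constructing a genuinely (sub/super)harmonic function whose error terms are controlled \emph{uniformly} in $x$ down to the correct order $x^{-(2+\delta)}$ — the exponent $\delta$ in \eqref{r-cond.2}, \eqref{moment_cond1}, \eqref{cond.xi.U+2} is exactly the slack needed — and then showing the upper and lower harmonic functions are asymptotically equivalent so that the constant $c$ is unambiguous. The passage \eqref{up.bound} already gives a crude upper bound, which is useful for justifying interchange of sums/integrals (tightness, no escape of mass) but is far from the sharp asymptotic; the real work is upgrading it via the harmonic-function machinery.
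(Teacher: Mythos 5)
Your overall architecture (harmonic function for the chain killed on $B=[0,x_0]$, Doob $h$-transform, $\Gamma$-limit theorem and renewal-function asymptotics for the transformed transient chain, then translate back through the cycle representation of $\pi$ over $B$) is indeed the paper's strategy. But the central object in Step~1 is misidentified, and this breaks the argument. The harmonic function of the killed chain must be \emph{increasing}: it is asymptotically the scale function of the limiting diffusion, $U(x)=\int_{x_0}^x e^{R(y)}dy$ with $R(y)=\int_0^y r(z)dz$, which solves $U''-rU'=0$ and is regularly varying with index $\rho=2\mu/b+1$. Your candidate $g(x)=x\,e^{-R(x)}$ is the conjectured \emph{tail of $\pi$}, not a harmonic function: already for the limiting diffusion one computes $m_1g'+\tfrac{m_2}{2}g''=\tfrac{m_2}{2}e^{-R(x)}\bigl(-3r(x)+2xr^2(x)-xr'(x)\bigr)\sim \tfrac{2\mu(2\mu-b)}{b}\,\tfrac{g(x)}{x^2}\cdot\tfrac{m_2(x)}{b}$, which is of exact order $g(x)/x^2$ with a strictly positive constant (here $2\mu>b$ is used). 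Since any correction $g_1=o(g)$ can only perturb the drift by $o(g/x^2)$, no function asymptotic to $x e^{-R(x)}$ can be made (even approximately) harmonic; indeed every positive harmonic function of the killed chain vanishing on $B$ is asymptotically proportional to $U$. So the claimed drift estimate $o(g(x)/x^{1+\delta'})$, while formally true for small $\delta'$, is far from harmonicity at the precision that matters, and the "sandwiching" sub/super-harmonic modifications of $g$ do not exist.

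This error propagates to Step~2: with your decreasing weight the kernel $\frac{h(y)}{h(x)}\P_x\{X_1\in dy,\tau_B>1\}$ is not stochastic, and even heuristically such a tilt pushes the chain toward small values (tilted drift $\approx (b-3\mu)/x$), not to $+\infty$; transience is obtained precisely by tilting with the increasing harmonic $V\sim U$, which gives drift $\sim(\mu+b)/x$ and second moment $\to b$, whence the renewal function satisfies $\widehat H(x)\sim x^2/(2\mu+b)$. There is also an internal inconsistency in your conclusion: if $h$ really were the harmonic function used in the transform, translating back gives $\pi(x,\infty)\asymp \int_x^\infty \widehat H(dy)/h(y)\asymp x^2/h(x)$, not $\asymp h(x)$; the correct asymptotics $\pi(x,\infty)\sim c\,x^2/U(x)\asymp x e^{-R(x)}$ emerges only because the harmonic function grows like $x^{\rho}$. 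Finally, you do not address why \eqref{3.moment} is needed: in the paper the convergence of third moments is exactly what pins down the second-order term $C_0e^{R(x)}$ in the expansion $V(x)=U(x)+C_0e^{R(x)}+o(e^{R(x)})$, which in turn is needed to compute the drift of the transformed chain to the precision $x\E\widehat\xi(x)\to\mu+b$.
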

It is clear that the convergence of third moments is a technical condition
because the asymptotic behaviour of the stationary measure depends on $m_1(x)$
and $m_2(x)$ only and does not depend on $m_3$. Also as follows from the
moments existence results \cite{Kor11}, it is likely that the statement of
Theorem~\ref{thm:tail} should follow under less restrictive condition than
\eqref{cond.xi.U+2}, with $2\mu/b+1+\delta$ moments instead.
Unfortunately, we cannot just remove restriction \eqref{3.moment}
from the theorem, but we can weaken it by introducing some structural restrictions,
the main of which is the left-continuity of $X_n$.

\begin{theorem}\label{thm:tail2}
Suppose that all conditions of Theorem {\rm\ref{thm:tail}} hold
except probably the condition \eqref{3.moment}.
If, in addition, $X_n$ lives on ${\mathbb Z}^+$ and
$\xi(x)\ge -1$, then the statement of Theorem {\rm\ref{thm:tail}}
remains valid.
\end{theorem}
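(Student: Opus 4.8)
\noindent\emph{Proof plan.}
The plan is to rerun the proof of Theorem~\ref{thm:tail}, to isolate the one point at which the convergence \eqref{3.moment} is genuinely used, and to replace it by an argument exploiting the two extra hypotheses $X_n\in\Zp$ and $\xi(x)\ge-1$. Per the description in the abstract, the proof of Theorem~\ref{thm:tail} builds an increasing harmonic function $\psi$ on $(x_0,\infty)$ with $\psi(x_0)=0$, $\psi(x)\asymp x\,e^{\int_0^x r(u)\,du}$ and $\psi'(x)\asymp e^{\int_0^x r(u)\,du}$, expresses $\pi(x,\infty)$ through a regeneration/Green's-function identity, and reduces the statement to the behaviour of the chain as it crosses level $x$. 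I expect \eqref{3.moment} to be consumed there, in a non-lattice local renewal (local limit) step, where a third-moment, Edgeworth-type correction enters and must \emph{stabilise} as $x\to\infty$ for the limiting constant to exist; assumption \eqref{moment_cond1} bounds that correction but does not make it converge, which is why it alone does not suffice.

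For the chain of Theorem~\ref{thm:tail2} the skip-free-to-the-left structure removes this obstacle. First, downward first passage has no overshoot: from any $x>x_0$ the chain enters $B=[0,x_0]$ at the single point $x_0$, so $X_{\tau_B}\equiv x_0$ and $x_0$ is a recurrent atom, yielding the exact identity $\pi(x,\infty)=Z^{-1}\E_{x_0}[g_x(X_1);X_1>x_0]$ for $x>x_0$, where $Z=\E_{x_0}\tau_{x_0}^+$ and $g_x(y):=\E_y[\#\{k<\tau_B:X_k>x\}]$. Likewise, every exit of the chain from $(x,\infty)$ occurs from level $x$ and lands at $x$, so the renewal analysis of the returns to level $x$ takes place entirely on the lattice, with no overshoot corrections on either side. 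Second, I would construct $\psi$ probabilistically, $\psi(y):=\lim_{N\to\infty}\P_y\{\tau_N<\tau_B\}/\P_{x_0+1}\{\tau_N<\tau_B\}$, which requires no Taylor expansion of $\E_x[\psi(X_1)]$; existence of the limit and the asymptotics $\psi(x)\sim\mathrm{const}\cdot x\,e^{\int_0^x r(u)\,du}$ should follow from the Lyapunov/test-function estimates already underlying Theorem~\ref{thm:tail} together with the negative asymptotic drift $b-2\mu$ of $X_n^2$, and none of these inputs sees $m_3$.

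Since $\psi(X_{\tau_B})\equiv\psi(x_0)=0$, optional stopping of the martingale $\psi(X_{n\wedge\tau_B\wedge\tau_{(x,\infty)}})$ gives $\psi(y)=\E_y[\psi(X_{\tau_{(x,\infty)}});\tau_{(x,\infty)}<\tau_B]$ with \emph{no} boundary-overshoot term, whence $\P_y\{\tau_{(x,\infty)}<\tau_B\}\sim\psi(y)/(\kappa\,\psi(x))$ with $\kappa\ge1$ determined solely by the upward overshoot of the first entry into $(x,\infty)$ --- precisely the quantity controlled by \eqref{cond.xi.U+2}, which is retained. A renewal decomposition over the returns to level $x$ then yields $g_x(y)\sim\beta(x)\,\psi(y)$ for each fixed $y>x_0$, with $\beta(x)\sim\mathrm{const}\cdot x/\psi'(x)\sim\mathrm{const}\cdot x\,e^{-\int_0^x r(u)\,du}$, and hence $\pi(x,\infty)\sim c\,x\,e^{-\int_0^x r(u)\,du}$ for a positive constant $c$ built from $Z$, $\E_{x_0}[\psi(X_1);X_1>x_0]$ and the level-$x$ renewal constant. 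I expect the main obstacle to be exactly this last renewal step: applying the \emph{lattice} renewal theorem to the returns to level $x$ with remainder terms uniform in $x$, using only \eqref{moment_cond1}--\eqref{cond.xi.U+2} and no stabilisation of $m_3(x)$; this needs a uniform-in-level lattice local limit theorem for the increments of the chain near level $x$, and it is the hypothesis $\xi(x)\ge-1$ that keeps those increments genuinely lattice of span one and non-degenerate, so that the lattice version replaces the non-lattice local renewal theorem used in the proof of Theorem~\ref{thm:tail}. Transferring the asymptotics of $\psi$, handling the chain inside $B$, and assembling $c$ are then routine given that machinery.
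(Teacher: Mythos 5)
Your plan has two genuine gaps, and the first one sits exactly where the whole difficulty of Theorem~\ref{thm:tail2} lives. You assert that the existence and the asymptotics $\psi(x)\sim\mathrm{const}\cdot x\,e^{R(x)}$ of your ratio-limit harmonic function ``should follow from the Lyapunov/test-function estimates already underlying Theorem~\ref{thm:tail}'' and that ``none of these inputs sees $m_3$''. That is assuming the conclusion. In the proof of Theorem~\ref{thm:tail} the condition \eqref{3.moment} is not consumed in any local renewal or Edgeworth-type step (there is no local limit theorem anywhere in the argument; the renewal input is an \emph{integral} renewal theorem obtained from the weak convergence of $X_n^2/n$ to a Gamma law). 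It is consumed in Lemma~\ref{L.harm2}: the third moment enters the Taylor expansion of $\E U(x+\xi(x))-U(x)$ at the critical order $e^{R(x)}/x^2$, and its convergence is what identifies the second-order term $C_0e^{R(x)}$ in $V(x)=U(x)+C_0e^{R(x)}+o(e^{R(x)})$, which in turn is what allows one to compute the drift \eqref{exp5} of the transformed chain. Without \eqref{3.moment} the moment bound \eqref{moment_cond1} only gives $O(e^{R(x)}/x^2)$ drifts, and no amount of Lyapunov bookkeeping yields the required precision on $V-U$ (equivalently on your $\psi$). Your proposal offers no replacement argument, and in particular it never uses $\xi(x)\ge -1$ for this purpose; you use left-continuity only for the absence of downward overshoot. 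The paper's actual use of it is different and essential: since from $x+y$ the chain must pass through $x$ before reaching $B$, the increments of the harmonic function can be sandwiched, $U(x+y)-U(x)+C_2(e^{R(x+y)}-e^{R(x)})\le V(x+y)-V(x)\le U(x+y)-U(x)+C_1(e^{R(x+y)}-e^{R(x)})$, where $C_1,C_2$ are chosen (Lemma~\ref{L.harm2.2.left}) so that $U_{C_1}$, $U_{C_2}$ have ultimately negative, respectively positive, drift --- a choice that needs only $O(e^{R(x)}/x^2)$ information and hence no $m_3$. This sandwich replaces \eqref{difference.VU} and lets the rest of the Theorem~\ref{thm:tail} proof run unchanged. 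That idea is absent from your plan.

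The second gap is the final step $g_x(y)\sim\beta(x)\,\psi(y)$, which you base on a ``uniform-in-level lattice local limit theorem'' for the increments of the chain near level $x$. You neither formulate nor prove such a statement, and it is not clearly available under \eqref{moment_cond1} and \eqref{cond.xi.U+2} alone: the jump distributions $\xi(x)$ vary with the level and only moment conditions are imposed, so span-one lattice structure by itself does not give local estimates uniform in $x$. The paper deliberately avoids this: after the $h$-transform by $V$, the tail is written as $\pi(x,\infty)=\widehat c\int_x^\infty V(y)^{-1}\,d\widehat H(y)$ and everything reduces to $\widehat H(x)\sim x^2/(2\mu+b)$, which follows from the Gamma-limit theorem together with crude uniform bounds on return probabilities and on $\E_yT(x)$ --- integral, not local, information. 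So even if you granted yourself the $\psi$-asymptotics, the quantitative core of your level-$x$ renewal decomposition (including the stabilisation of your overshoot constant $\kappa$ as $x\to\infty$, needed for an equivalence rather than two-sided bounds) remains unproven machinery, whereas the paper's route needs none of it.
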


To prove Theorems~\ref{thm:tail} and \ref{thm:tail2}
we change the probability measure
in such a way that the resulting object will be a transient
Markov chain with asymptotically zero drift.
We apply the following change of measure:
$$
\widehat P(x,dy):=\frac{V(y)\P_x\{X_1\in dy,\tau_B>1\}}{V(x)},
$$
where $V$ is a harmonic function for the substochastic kernel
$\P_x\{X_1\in dy,\tau_B>1\}$.
In this way we need to produce a suitable harmonic function $V$.
Since the harmonic function for the corresponding Bessel-type process
conditioned to stay positive is known, we adapt the method
proposed in \cite{DW12} where random walks conditioned to stay
in a cone were considered. (This method allows one to construct
harmonic functions for random walks from harmonic functions
for corresponding limiting diffusions.) Again,  the only processes,
where harmonic functions were known, are diffusions and Markov chains
with jumps $\pm 1$. The latter case has been considered by
Alexander \cite{Alex11}.

Investigation of large deviation probabilities for one-dimensional
Markov chains with ultimately negative drift heavily depends on
whether this chain is similar to the process of summation with more
or less homogeneous drift (and in this case we may speak about the
process with continuous statistics) or this Markov chain is close to
a random walk on $\mathbb{R}^+$ with delay at the origin where the
mean drift change its sign near the origin (in this case we have the
chain with discontinuous statistics). The only Markov chain which can
be somehow reduced to the sums is the chain $W_{n}=(W_{n-1}+\xi_n)^+$
with independent identically distributed $\xi$'s which equals in
distribution to $\max_{k\leq n}\sum_{j=1}^k\xi_j$. For these two
classes of Markov chains (with continuous and discontinuous statistics)
the methods for investigation of large deviations are essentially
different. Say, in Cramer case where some exponential moment of jumps
is bounded, an appropriate exponential change of measure leads preserves
the measures to be probabilistic. If we apply exponential change of
measures to a chain with discontinuous statistics may lead to
non-stochastic kernel. Such approach was utilised in \cite{Kor04} and
there appears a necessity for proving limit theorems for non-stochastic
transition kernels.

In the setting of the present paper one could think of applying
of a change of measure method with power-like weight function.
Then the probability measure changes in such a way that the resulting
object will be similar to a transient Markov chain with
asymptotically zero drift. One may look at the following
two approaches:
\begin{itemize}
\item[(a)]
$\displaystyle Q^{(1)}(x,B)
:=\frac{\E\{X_1^\rho{\rm 1}\{X_1\in B\}|X_0=x\}}{x^{\rho}}$,
$\rho=2\mu/b+1$;
\item[(b)]
$\displaystyle Q^{(2)}(x,B)
:=\frac{\E\{V(X_1){\rm 1}\{X_1\in B\}|X_0=x\}}{V(x)}$.
\end{itemize}
In the first case we would have measures which are not necessarily probabilistic, i.e.,
$Q^{(1)}(x,\mathbb{R}^+)$ can be smaller or greater than $1$; this case is similar to that
considered in \cite{Kor04} for the case of the exponential change of measure.

With
$\rho=2\mu/b+1$ one can show that the Markov evolution of masses is asymptotically equivalent to
a transient Markov chain with asymptotically zero drift. And our hope is that one can adopt results,
which will be proved in the present project, to Markov evolutions of masses. If this is the case, then
we can translate the results for Markov evolutions of masses into results for positive recurrent
Markov chains by applying the inverse change of measure.

As it was mentioned above, in this paper we develop the second
possibility for the change of measure, where we get stochastic
transition kernel corresponding to a transient Markov chain.
Then the main difficulties are related to the fact that the
harmonic function $V$ is given implicitely. In particular, we even
need to check that $V$ is regular varying function with index $\rho$.

Having this observation in mind we face to necessity of
obtaining limiting results for transient Markov chains.
In Section \ref{sec:transience} we give rather general
close to necessary conditions for transience while in Section
\ref{sec:escape} we make some quantitative analysis
of how fast a transient chain escapes to the infinity.
Section \ref{sec:gamma} is devoted to convergence to
for the $\Gamma$-distribution under optimal assumptions:
null-recurrence or transience of the process
and minimal integrability restrictions.
Section \ref{sec:renewal} contains integral renewal theorem for
transient Markov chain with drift $c/x$, $c>0$.
In Section \ref{sec:harmony} a general results on harmonic
functions are discussed.
In order to obtain results for the original positive recurrent
Markov chain one needs to apply again the inverse change of measure.
This is done in Section \ref{sec:proof}.

\mysection{Conditions for transience revised}
\label{sec:transience}

In general, if, for some $x_0$ and $\varepsilon>0$,
\begin{eqnarray}\label{rec.1}
\frac{2m_1(x)}{m_2(x)}\ge 1+\varepsilon
\ \mbox{ for all }x\ge x_0,
\end{eqnarray}
then the drift to the right dominates the diffusion
and the corresponding Markov chain $X_n$ is typically transient.
As an example concluding this section shows,
for transience, the Markov chain should satisfy some additional
conditions on jumps. In the literature, the transience
in Lamperti problem was studied by Lamperti \cite{Lamp60},
Kersting \cite{Ker86} and Menshikov et al. \cite{AMI}
under different conditions, say for the case of bounded jumps
or of moments of order $2+\delta$ bounded.
Our goal here is to clarify what condition in addition to
\eqref{rec.1} is responsible for transience. Surprisingly,
such a condition is rather weak and is presented in \eqref{rec.3}.

\begin{theorem}\label{thm:transience}
Assume the condition \eqref{rec.1} holds. In addition, let
\begin{eqnarray}\label{rec.2}
\P\bigl\{\limsup_{n\to\infty}X_n=\infty\bigr\} &=& 1
\end{eqnarray}
and, for some $\gamma$, $0<\gamma<1-1/\sqrt{1+\varepsilon}$,
\begin{eqnarray}\label{rec.3}
\P\{\xi(x)\le-\gamma x\} &=& o\Bigl(\frac{m_2(x)}{x}p(x)\Bigr)
\ \mbox{ as }x\to\infty,
\end{eqnarray}
where a non-increasing function $p(x)$ is integrable.
Then $X_n\to\infty$ as $n\to\infty$ with probability 1,
so that $X_n$ is transient.
\end{theorem}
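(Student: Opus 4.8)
The plan is to exhibit a bounded, strictly decreasing test function $V>0$ on $\Rp$ with $V(\infty)=0$ such that $V(X_n)$, stopped the first time the chain drops below a suitably high level, is a non-negative supermartingale. Its almost sure convergence, combined with hypothesis \eqref{rec.2}, forces $X_n\to\infty$ on the event that the chain never returns to a bounded set, and a final strong-Markov bootstrap (again via \eqref{rec.2}) upgrades this to the almost sure statement from every initial state.

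Concretely, I would take $V(x):=(1+x)^{-\alpha}$ with $\alpha\in(0,1)$ chosen small (the shift by $1$ only serving to keep $V$ bounded near the origin). The heart of the matter is the drift estimate: for large $x$,
\[
\E[V(X_1)\mid X_0=x]=\E\bigl[(1+x+\xi(x))^{-\alpha};\xi(x)>-\gamma x\bigr]+\E\bigl[(1+x+\xi(x))^{-\alpha};\xi(x)\le-\gamma x\bigr].
\]
On the first event $\xi(x)/(1+x)>-\gamma$, so a second-order Taylor expansion of $u\mapsto(1+u)^{-\alpha}$ with Lagrange remainder, whose coefficient is then bounded by $\tfrac{\alpha(\alpha+1)}{2}(1-\gamma)^{-\alpha-2}$, together with $\E[\xi(x);\xi(x)>-\gamma x]\ge m_1(x)$ and $\E[\xi^2(x);\xi(x)>-\gamma x]\le m_2(x)$, yields
\[
\E\bigl[(1+x+\xi)^{-\alpha};\xi>-\gamma x\bigr]\le V(x)-\alpha(1+x)^{-\alpha-1}m_1(x)+\tfrac{\alpha(\alpha+1)}{2}(1-\gamma)^{-\alpha-2}(1+x)^{-\alpha-2}m_2(x).
\]
By \eqref{rec.1} one has $m_1(x)\ge\tfrac{1+\varepsilon}{2}m_2(x)$, so the last two terms amount to $-\alpha(1+x)^{-\alpha-1}m_1(x)(1-o(1))$ as $x\to\infty$; it is here that the admissible range $\gamma<1-1/\sqrt{1+\varepsilon}$ of $\gamma$ is used, to keep the remainder coefficient $(1-\gamma)^{-\alpha-2}$ below what the drift term can absorb for a suitable $\alpha$. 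On the second event, $X_1\ge0$ gives $(1+x+\xi)^{-\alpha}\le1$, so that contribution is at most $\P\{\xi(x)\le-\gamma x\}$, which by \eqref{rec.3} is $o(m_2(x)p(x)/x)$; since a non-increasing integrable $p$ obeys $xp(x)\to0$, this is $o\bigl((1+x)^{-\alpha-1}m_1(x)\bigr)$ when $\alpha<1$, and is therefore swallowed by the negative drift. Altogether $\E[V(X_1)\mid X_0=x]\le V(x)$ for all $x\ge x_1$, for some $x_1>x_0$.

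Fixing such an $x_1$ and putting $\sigma:=\min\{n\ge1:X_n\le x_1\}$, for $X_0=x>x_1$ the process $V(X_{n\wedge\sigma})$ is a non-negative bounded supermartingale, hence converges a.s.; optional stopping gives $V(x_1)\,\P_x\{\sigma<\infty\}\le V(x)$, i.e. $\P_x\{\sigma=\infty\}\ge1-V(x)/V(x_1)\to1$ as $x\to\infty$. On $\{\sigma=\infty\}$ the chain stays above $x_1$ and $V(X_n)$ converges, but by \eqref{rec.2} $\limsup_nX_n=\infty$, so $\liminf_nV(X_n)=0$ and therefore $V(X_n)\to0$, i.e. $X_n\to\infty$. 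Thus $\P_x\{X_n\to\infty\}\ge1-V(x)/V(x_1)$ for all large $x$. For an arbitrary starting state $y$, let $T_N:=\min\{n:X_n\ge N\}$, which is finite $\P_y$-a.s. by \eqref{rec.2}; the strong Markov property at $T_N$ gives $\P_y\{X_n\to\infty\}=\E_y\bigl[\P_{X_{T_N}}\{X_n\to\infty\}\bigr]\ge\inf_{z\ge N}\P_z\{X_n\to\infty\}\ge1-V(N)/V(x_1)$, and letting $N\to\infty$ yields $\P_y\{X_n\to\infty\}=1$, which is the asserted transience.

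The main obstacle is the drift estimate of the second step. One has to (i) extract a genuinely negative drift of $V$ out of \eqref{rec.1} on the bulk event $\{\xi(x)>-\gamma x\}$, controlling the Taylor remainder uniformly over a possibly heavy right tail and over the size of $m_2(x)$, and (ii) show that on the dangerous event $\{\xi(x)\le-\gamma x\}$ — where $V$ may jump up by a factor of order $x^\alpha$ — the contribution is strictly smaller than this negative drift; this is precisely what \eqref{rec.3} together with integrability of $p$ provides, and it is where the parameters $\gamma$, $\varepsilon$ and $\alpha$ must be matched. The remaining ingredients — supermartingale convergence, the use of \eqref{rec.2} to rule out a positive limit, and the bootstrap to all initial states — are routine.
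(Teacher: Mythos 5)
Your overall strategy (bounded decreasing test function, supermartingale convergence, then \eqref{rec.2} to force the limit to be zero, plus a strong-Markov bootstrap) is the same as the paper's, but the concrete test function you chose does not work under the stated hypotheses, and this is exactly the delicate point of the theorem. First, note that \eqref{rec.1}, as it is used throughout the paper (and as it must be read in the asymptotically-zero-drift regime $m_1(x)\sim\mu/x$, $m_2(x)\to b$), is the Lamperti-type condition $2xm_1(x)/m_2(x)\ge 1+\varepsilon$, not $m_1(x)\ge\tfrac{1+\varepsilon}{2}m_2(x)$; this is also the only reading under which the restriction $\gamma<1-1/\sqrt{1+\varepsilon}$ plays the role you ascribe to it. Under this reading, the negative drift you can extract from $V(x)=(1+x)^{-\alpha}$ on the bulk event is only of order $\alpha\bigl((1+\varepsilon)-(1+\alpha)(1-\gamma)^{-\alpha-2}\bigr)m_2(x)x^{-2-\alpha}$ (in particular your claim that the first- and second-order terms combine to $-\alpha x^{-\alpha-1}m_1(x)(1-o(1))$ is not correct: the two terms are of the same order, which is why the constraint on $\gamma$ is needed). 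The fatal gap is the bad event: integrability of a non-increasing $p$ gives only $xp(x)\to0$, so \eqref{rec.3} yields $\P\{\xi(x)\le-\gamma x\}=o\bigl(m_2(x)/x^2\bigr)$ and nothing stronger; this is \emph{not} $o\bigl(m_2(x)x^{-2-\alpha}\bigr)$ for any fixed $\alpha>0$. Concretely, with $p(x)=1/(x\log^2x)$ the large-jump contribution may be of order $m_2(x)/(x^2\log^2x)$, which dominates the stabilizing drift $\asymp m_2(x)x^{-2-\alpha}$ of every power-type test function, so the supermartingale property is never established.

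This is precisely why the paper does not use a power: it builds the test function out of $p$ itself. Using the majorant result of \cite{D2006}, one takes a non-increasing integrable $V_1\ge p$ regularly varying of index $-1$, sets $V_2(x)=\int_x^\infty V_1(y)y^{-1}dy$ and $V(x)=\int_x^\infty V_2(y)dy$; then $V$ is bounded, slowly varying, $V'(x)=-V_2(x)\sim -V_1(x)$ and $V''(x)=V_1(x)/x$, so the negative drift on the bulk event has order $m_2(x)V_1(x)/x$ with constant $\tfrac12\bigl(1+\varepsilon-(1-\gamma)^{-2}\bigr)>0$, and the bad-event term $V(0)\P\{\xi(x)\le-\gamma x\}=o\bigl(m_2(x)p(x)/x\bigr)\le o\bigl(m_2(x)V_1(x)/x\bigr)$ is negligible \emph{by construction}. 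If you replace your $(1+x)^{-\alpha}$ by such a slowly varying $V$ adapted to $p$, the rest of your argument (truncation or stopping to get a nonnegative bounded supermartingale, Doob convergence, \eqref{rec.2} to identify the limit as $0$, and the strong-Markov bootstrap from an arbitrary initial state) goes through and matches the paper's proof.
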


The condition \eqref{rec.2} (which was first proposed
in this framework by Lamperti \cite{Lamp60}) may be equivalently restated
as follows: for any $N$ the exit time from the set $[0,N]$
is finite with probability 1. In this way it is clear that,
for a countable Markov chain, the irreducibility implies
\eqref{rec.2}. For a Markov chain on general state space,
the related topic is $\psi$-irreducibility, see
\cite[Secs 4 and 8]{MT}.

\begin{proof}[Proof of Theorem \ref{thm:transience}]
is based on the standard approach of construction
of a nonnegative bounded test function $V_*(x)\downarrow 0$
such that $V_*(X_n)$ is a supermartingale with further
application of Doob's convergence theorem for supermartingales.

Since $p(x)$ is non-increasing and integrable, by \cite{D2006},
there exists a continuous non-increasing integrable regularly
varying at infinity with index $-1$ function $V_1(x)$
such that $p(x)\le V_1(x)$. Take
$$
V(x):=\int_x^\infty V_2(y)dy,
\quad\mbox{ where }\quad
V_2(x):=\int_x^\infty\frac{V_1(y)}{y}dy.
$$
By Theorem 1(a) from \cite[Ch  VIII, Sec 9]{Feller}
we know that $V_2$ is regularly varying with index $-1$
and $V_2(x)\sim V_1(x)$ as $x\to\infty$.
Since $V_1$ is integrable, the nonnegative non-increasing
function $V(x)$ is bounded, $V(0)<\infty$, and,
by the same reference, $V(x)$ is slowly varying.

Let us prove that the mean drift of $V(x)$ is negative
for all sufficiently large $x$. We have
\begin{eqnarray*}
\lefteqn{\E V(x+\xi(x))-V(x)}\\
&\le& V(0)\P\{\xi(x)\le -\gamma x\}
+\E\{V(x+\xi(x))-V(x);\xi(x)>-\gamma x\}\\
&=& V(0)\P\{\xi(x)\le -\gamma x\}
+V'(x)\E\{\xi(x);\xi(x)>-\gamma x\}\\
&&\hspace{50mm} +\frac12\E\{\xi^2(x) V''(x+\theta\xi(x));
\xi(x)>-\gamma x\},
\end{eqnarray*}
where $0\le\theta=\theta(x,\xi(x))\le 1$,
by Taylor's formula with the remainder in the Lagrange form.
By the construction, $V'(x)=-V_2(x)<0$,
$\E\{\xi(x);\xi(x)>-\gamma x\}\ge m_1(x)>0$ for $x\ge x_0$,
and $V''(x)=V_1(x)/x$ is non-increasing. Hence,
\begin{eqnarray*}
\lefteqn{\E V(x+\xi(x))-V(x)}\\
&\le& V(0)\P\{\xi(x)\le -\gamma x\}
-V_2(x)m_1(x)+\frac{V''((1-\gamma)x)}{2}m_2(x)\\
&=& o\Bigl(\frac{m_2(x)V_1(x)}{x}\Bigr)
-\frac{m_2(x)V_1(x)}{2x}\Bigl(\frac{2xm_1(x)}{m_2(x)}
\frac{V_2(x)}{V_1(x)}
-\frac{x}{V_1(x)}\frac{V_1((1-\gamma)x)}{(1-\gamma)x}\Bigr),
\end{eqnarray*}
by the condition \eqref{rec.3} and the inequality $p(x)\le V_1(x)$.
Applying now the condition \eqref{rec.1} together with
the equivalences $V_2(x)\sim V_1(x)$ and
$V_1((1-\gamma)x)\sim V_1(x)/(1-\gamma)$ we deduce that there
exists a sufficiently large $x_*$ such that, for all $x\ge x_*$,
\begin{eqnarray*}
\E V(x+\xi(x))-V(x)
&\le& -\frac{m_2(x)V_1(x)}{2x}\varepsilon_*,
\end{eqnarray*}
where $\varepsilon_*:=(1+\varepsilon-(1-\gamma)^{-2})/2>0$.
Now take $V_*(x):=\min(V(x),V(x_*))$. Then
\begin{eqnarray*}
\E V_*(x+\xi(x))-V_*(x) &\le& \E V(x+\xi(x))-V(x)<0
\end{eqnarray*}
for every $x\ge x_*$ and
\begin{eqnarray*}
\E V_*(x+\xi(x))-V_*(x) &=& \E\{V(x+\xi(x))-V(x_*);x+\xi(x)\ge x_*\}
\le 0
\end{eqnarray*}
for every $x<x_*$. Therefore, $V_*(X_n)$ constitutes a
nonnegative bounded supermartingale and, by Doob's
convergence theorem, $V_*(X_n)$ has an a.s. limit as $n\to\infty$.
Due to the condition \eqref{rec.2}, this limit equals
$V_*(\infty)=0$ and the proof is complete.
\end{proof}

Roughly speaking, the condition \eqref{rec.3} guarantees that
large negative jumps don't make any valuable
contribution to the evolution of the chain compared to
the contribution of the first and second moments of jumps.
Let us demonstrate by example that the condition \eqref{rec.3}
is very essential and in a sense almost necessary.

Consider a Markov chain $X_n$ on $\Rp$ satisfying the following
conditions: for some function $f(x)\ge 0$, $m_1(x) \le f(x)$ and
\begin{eqnarray}\label{origin}
\P\{\xi(x)=-x\} &=& f(x)p(x)
\end{eqnarray}
for all sufficiently large $x$, where $p(x)$ is a non-increasing
function satisfying $p(x)=O(1/x)$ and
$$
V(x):=\int_0^x p(y)dy\to\infty\ \mbox{ as }x\to\infty.
$$

In this example the high probability of the large negative jump
$-x$ leads to recurrence of the chain (note that if
$f(x)=m_2(x)/x$ then the condition \eqref{rec.3} fails to hold).

Indeed, decompose the mean drift of the increasing concave
test function $V$ at state $x$ separating the jump to the origin:
\begin{eqnarray}\label{deco.1}
\E V(x+\xi(x))-V(x) &=& -V(x)\P\{\xi(x)=-x\}\nonumber\\
&& +\E\{V(x+\xi(x))-V(x);\xi(x)>-x\}.
\end{eqnarray}
Since $V(x)$ is concave and $V'(x)=p(x)$, by Jensen's inequality,
\begin{eqnarray*}
\E\{V(x+\xi(x))-V(x);\xi(x)>- x\}
&\le& p(x)\E\{\xi(x);\xi(x)>-x\}\\
&=& p(x)(m_1(x)+x\P\{\xi(x)=-x\})\\
&=& O(p(x)f(x))\ \mbox{ as }x\to\infty,
\end{eqnarray*}
because $xp(x)$ is bounded. Substituting this together with
\eqref{origin} into \eqref{deco.1}, we obtain the following
upper bound for the drift:
\begin{align*}
\E V(x+\xi(x))-V(x) \le p(x)f(x)(-V(x)+O(1)).
\end{align*}
Since $V(x)\to\infty$ as $x\to\infty$, the drift becomes
asymptotically negative and the chain $X_n$ is recurrent,
see e.g. \cite[Theorem 8.4.3]{MT}.

\mysection{Quantitative analysis of escaping to infinity
for transient chain}
\label{sec:escape}

First we give an upper bound for the return probability
for transient Markov chain.

\begin{lemma}\label{est.for.return}
Assume the condition \eqref{rec.1} holds and,
for some $\delta$, $\gamma>0$ satisfying
$(1+\delta)/(1-\gamma)^{2+\delta}<1+\varepsilon$,
\begin{eqnarray}\label{rec.4}
\P\{\xi(x)\le-\gamma x\} &=& o\Bigl(\frac{m_2(x)}{x^{2+\delta}}\Bigr)
\ \mbox{ as }x\to\infty.
\end{eqnarray}
Then there exist $x_0$ such that
\begin{eqnarray*}
\P\{X_n\le x\mbox{ for some }n\ge1\mid X_0=y\} &\le& (x/y)^\delta
\quad \mbox{ for all }y>x>x_0.
\end{eqnarray*}
\end{lemma}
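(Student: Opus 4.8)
The plan is to construct a nonnegative test function $W(x)$, behaving like $x^{-\delta}$ for large $x$, such that $W(X_n)$ is a supermartingale as long as the chain stays in a region $(x_0,\infty)$, and then read off the bound from the optional-stopping/supermartingale inequality. Concretely I would take $W(x):=x^{-\delta}$ for $x\ge x_0$ (with a bounded continuation on $[0,x_0]$, e.g. $W(x):=x_0^{-\delta}$ there), so that $W$ is decreasing, convex on $(x_0,\infty)$, with $W'(x)=-\delta x^{-\delta-1}$ and $W''(x)=\delta(\delta+1)x^{-\delta-2}$. The goal is to show that, for all sufficiently large $x$,
\begin{eqnarray*}
\E W(x+\xi(x))-W(x) &\le& 0.
\end{eqnarray*}

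The key step is a Taylor expansion of $W$ around $x$ with the jump split at $-\gamma x$, exactly as in the proof of Theorem \ref{thm:transience}. On the event $\{\xi(x)\le-\gamma x\}$ one bounds $W(x+\xi(x))\le W(0)<\infty$ (or the value of the bounded continuation), contributing at most $W(0)\P\{\xi(x)\le-\gamma x\}$, which by \eqref{rec.4} is $o(m_2(x)/x^{2+\delta})$. On the complementary event $\{\xi(x)>-\gamma x\}$ one writes
\begin{eqnarray*}
W(x+\xi(x))-W(x)
&=& W'(x)\xi(x)+\frac12 W''(x+\theta\xi(x))\xi^2(x),
\end{eqnarray*}
with $0\le\theta\le1$; since $W''$ is decreasing and positive on $(x_0,\infty)$ and $x+\theta\xi(x)\ge(1-\gamma)x$, one bounds $W''(x+\theta\xi(x))\le W''((1-\gamma)x)=\delta(\delta+1)((1-\gamma)x)^{-\delta-2}$. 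Using $W'(x)<0$ and $\E\{\xi(x);\xi(x)>-\gamma x\}\ge m_1(x)>0$ for $x\ge x_0$, this gives
\begin{eqnarray*}
\E\{W(x+\xi(x))-W(x);\xi(x)>-\gamma x\}
&\le& -\delta x^{-\delta-1}m_1(x)+\frac{\delta(\delta+1)}{2}\frac{m_2(x)}{(1-\gamma)^{2+\delta}x^{2+\delta}}.
\end{eqnarray*}
Collecting terms, the drift is at most
\begin{eqnarray*}
-\frac{\delta m_2(x)}{2x^{2+\delta}}\Bigl(\frac{2xm_1(x)}{m_2(x)}-\frac{\delta+1}{(1-\gamma)^{2+\delta}}+o(1)\Bigr),
\end{eqnarray*}
and \eqref{rec.1} together with the hypothesis $(1+\delta)/(1-\gamma)^{2+\delta}<1+\varepsilon$ forces the bracket to be bounded below by a positive constant for all large $x$; hence the drift is negative for $x\ge x_0$ after enlarging $x_0$.

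The last step is to convert the supermartingale property into the stated hitting-probability bound. Fix $y>x>x_0$ and let $\sigma:=\min\{n\ge1:X_n\le x\}$. Since $W(X_{n\wedge\sigma})$ is a nonnegative supermartingale under $\P_y$ (the drift is $\le0$ at every state $>x_0$, in particular at every state visited strictly before $\sigma$, and $W\ge0$ everywhere), Fatou's lemma gives $\E_y W(X_\sigma)\I\{\sigma<\infty\}\le W(y)=y^{-\delta}$. On $\{\sigma<\infty\}$ we have $X_\sigma\le x$; but I must be slightly careful, because a large negative jump can overshoot below $x_0$, where $W$ is only bounded by $x_0^{-\delta}$, not by $X_\sigma^{-\delta}$. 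In fact $W(X_\sigma)\ge\min(x^{-\delta},x_0^{-\delta})=x^{-\delta}$ on $\{x<X_\sigma\le x\}$... more simply, since $W$ is nonincreasing and $W(z)\ge x^{-\delta}$ for every $z\le x$ (as $W(z)=z^{-\delta}\ge x^{-\delta}$ for $x_0<z\le x$ and $W(z)=x_0^{-\delta}\ge x^{-\delta}$ for $z\le x_0$, using $x>x_0$), we get $x^{-\delta}\,\P_y\{\sigma<\infty\}\le\E_y W(X_\sigma)\I\{\sigma<\infty\}\le y^{-\delta}$, i.e. $\P_y\{\sigma<\infty\}\le(x/y)^\delta$, which is the claim.

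The main obstacle I anticipate is purely bookkeeping: making the $o(1)$ uniform and checking that the chosen continuation of $W$ onto $[0,x_0]$ keeps the drift nonpositive at states in $(x_0,\infty)$ whose jumps may land in $[0,x_0]$ (this is why $W$ must be taken bounded there, and it is harmless because such landings only make $W(x+\xi(x))$ smaller). No genuinely new idea beyond the test-function construction of Theorem \ref{thm:transience} is needed; the only real requirement is the strict inequality $(1+\delta)/(1-\gamma)^{2+\delta}<1+\varepsilon$, which is exactly what produces a positive constant in the bracket above.
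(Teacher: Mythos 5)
Your proposal is correct and follows essentially the same route as the paper: the same test function $\min(x^{-\delta},\mathrm{const})$, the same Taylor expansion with the jump split at $-\gamma x$, the bound of the large-negative-jump term via \eqref{rec.4}, and negativity of the drift from \eqref{rec.1} together with $(1+\delta)/(1-\gamma)^{2+\delta}<1+\varepsilon$. The only cosmetic difference is that you conclude by optional stopping plus Fatou at the first entrance time into $[0,x]$, whereas the paper invokes Doob's maximal inequality for the nonnegative bounded supermartingale, which is the same argument in a different wrapper.
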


\begin{proof}
Fix $y>0$. Consider a test function $V(x):=\min(x^{-\delta},1)$.
The mean drift of $V(x)$ is negative for all sufficiently large $x$.
Indeed,
\begin{eqnarray*}
\E V(x+\xi(x))-V(x)
&\le& \P\{\xi(x)\le -\gamma x\}
+\E\{V(x+\xi(x))-V(x);\xi(x)>-\gamma x\}\\
&=& \P\{\xi(x)\le -\gamma x\}
-\frac{\delta}{x^{1+\delta}}\E\{\xi(x);\xi(x)>-\gamma x\}\\
&&\hspace{20mm} +\frac{\delta(1+\delta)}{2}
\E\Bigl\{\frac{\xi^2(x)}{(x+\theta\xi(x))^{2+\delta}};
\xi(x)>-\gamma x\Bigr\},
\end{eqnarray*}
where $0\le\theta=\theta(x,\xi(x))\le 1$, by Taylor's formula.
Therefore,
\begin{eqnarray*}
\E V(x+\xi(x))-V(x)
&\le& \P\{\xi(x)\le -\gamma x\}
-\frac{\delta}{x^{1+\delta}}m_1(x)
+\frac{\delta(1+\delta)}{2((1-\gamma)x)^{2+\delta}}m_2(x)\\
&=& o\Bigl(\frac{m_2(x)}{x^{2+\delta}}\Bigr)
-\frac{\delta m_2(x)}{2x^{2+\delta}}\Bigl(\frac{2xm_1(x)}{m_2(x)}
-\frac{1+\delta}{(1-\gamma)^{2+\delta}}\Bigr),
\end{eqnarray*}
by the condition \eqref{rec.4}.
Then the condition \eqref{rec.1} implies that there
exists sufficiently large $x_*$ such that, for all $x\ge x_*$,
\begin{eqnarray*}
\E V(x+\xi(x))-V(x)
&\le& -\frac{\gamma m_2(x)}{2x^{2+\delta}}\varepsilon_*,
\end{eqnarray*}
where $\varepsilon_*:=(1+\varepsilon-(1+\delta)/(1-\gamma)^{2+\delta})/2>0$.
Now take $V_*(x):=\min(V(x),V(x_*))$ so that $V_*(X_n)$
is nonnegative bounded supermartingale.
Hence we may apply Doob's inequality for nonnegative
supermartingale and deduce that, for every $y>x\ge x_*$
(so that $V_*(y)<V_*(x)$),
\begin{eqnarray*}
\P\{\sup_{n\ge 1} V_*(X_n)>V_*(x)\mid V_*(X_0)=V_*(y)\}
&\le& \frac{\E V_*(X_0)}{V_*(x)}=\Bigl(\frac{x}{y}\Bigr)^\delta,
\end{eqnarray*}
which is equivalent to the lemma conclusion.
\end{proof}

In the next lemma we estimate from above the mean value
$\E_y T(x)$ of the first up-crossing time
$$
T(x):=\min\{n\ge 1: X_n>x\}.
$$

\begin{lemma}\label{lem.uniform}
Assume that, for some $x_0\ge 0$, $\varepsilon_0\ge0$,
and $\varepsilon>0$,
\begin{eqnarray}\label{T.above.cond.1}
2xm_1(x)+m_2(x) &\ge&
\left\{
\begin{array}{cl}
\varepsilon,&\mbox{ if }x>x_0,\\
-\varepsilon_0,&\mbox{ if }x\le x_0.
\end{array}
\right.
\end{eqnarray}
Then, for every $x>y$,
\begin{eqnarray*}
\E _y T(x) &\le&
\frac{x^2-y^2+c(x)+(\varepsilon+\varepsilon_0)H_y(x_0)}{\varepsilon},
\end{eqnarray*}
where
\begin{eqnarray}\label{T.above.cond.2}
c(x) &:=& \sup_{z\le x}(2zm_1(z)+m_2(z))
\end{eqnarray}
and
\begin{eqnarray*}
H_y(x_0):=\sum_{n=0}^\infty \P_y\{X_n\le x_0\}.
\end{eqnarray*}
\end{lemma}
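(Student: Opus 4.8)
The plan is to run the classical quadratic--Lyapunov (Lamperti) argument with the test function $V(z)=z^2$, whose one--step mean increment at a state $z$ is precisely $2zm_1(z)+m_2(z)$ --- the quantity controlled from below by \eqref{T.above.cond.1} and, for $z\le x$, from above by $c(x)$. First I would fix $x>y$ and work with the bounded stopping times $\sigma_N:=N\wedge T(x)$. Since $\{T(x)\ge n\}\in\mathcal F_{n-1}$ and on this event $X_{n-1}\le x$, a one--step decomposition gives
$$\E_y X_{\sigma_N}^2-\E_y X_{\sigma_{N-1}}^2=\E_y\bigl[(2X_{n-1}m_1(X_{n-1})+m_2(X_{n-1}))\,\mathbf 1\{T(x)\ge n\}\bigr],$$
which telescopes to
$$\E_y X_{\sigma_N}^2=y^2+\E_y\sum_{k=0}^{\sigma_N-1}\bigl(2X_km_1(X_k)+m_2(X_k)\bigr).$$
Each term here is finite because $X_k\le x$ for $k\le\sigma_N-1\le T(x)-1$, so the summand is bounded; finiteness of $\E_y X_{\sigma_N}^2$ follows from the finiteness of the second moments of the jumps.

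Next, for $k\le\sigma_N-1$ we have $X_k\le x$, so by \eqref{T.above.cond.1} the $k$-th summand is $\ge\varepsilon$ on $\{X_k>x_0\}$ and $\ge-\varepsilon_0$ on $\{X_k\le x_0\}$; hence
$$\sum_{k=0}^{\sigma_N-1}\bigl(2X_km_1(X_k)+m_2(X_k)\bigr)\ \ge\ \varepsilon\,\sigma_N-(\varepsilon+\varepsilon_0)\,\#\{0\le k<\sigma_N:X_k\le x_0\}.$$
Taking $\E_y$ and bounding the last count by $\sum_{n\ge0}\P_y\{X_n\le x_0\}=H_y(x_0)$, I obtain
$$\varepsilon\,\E_y\sigma_N\ \le\ \E_y X_{\sigma_N}^2-y^2+(\varepsilon+\varepsilon_0)H_y(x_0).$$
It then remains to bound $\E_y X_{\sigma_N}^2$ from above, uniformly in $N$, by $x^2+c(x)$: on $\{T(x)>N\}$ one has $X_{\sigma_N}=X_N\le x$, while on $\{T(x)\le N\}$ one has $X_{\sigma_N}=X_{T(x)}$, where $X_{T(x)}$ arises from $X_{T(x)-1}\le x$ by one further step whose mean contribution to $z^2$ is at most $c(x)$; combining these two disjoint cases gives $\E_y X_{\sigma_N}^2\le x^2+c(x)$. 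Plugging this in and letting $N\to\infty$, so that $\E_y\sigma_N\uparrow\E_y T(x)$ by monotone convergence, yields $\varepsilon\,\E_y T(x)\le x^2-y^2+c(x)+(\varepsilon+\varepsilon_0)H_y(x_0)$, which is the assertion.

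The delicate step is the last one, the uniform bound $\E_y X_{\sigma_N}^2\le x^2+c(x)$, i.e. the control of the overshoot $X_{T(x)}^2-x^2$ at the up--crossing time. The subtlety is that $T(x)-1$ is not a stopping time, so one cannot simply condition on $\mathcal F_{T(x)-1}$ and invoke the drift bound $2zm_1(z)+m_2(z)\le c(x)$ for the mean increment: conditioning on the event that precisely this step crosses $x$ could in principle bias the increment of $z^2$ upward. Handling this rigorously is where one leans on the structural hypotheses of the surrounding analysis --- in particular, control on the size of the jumps, which limits how far the chain can overshoot the level $x$. Everything else is routine Foster--Lyapunov bookkeeping.
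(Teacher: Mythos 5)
Your argument is essentially the paper's own. The paper forms the process $Y_n=X_n^2+(\varepsilon_0+\varepsilon)\sum_{k=0}^{n-1}\I\{X_k\le x_0\}$, notes that by \eqref{T.above.cond.1} its conditional drift is at least $\varepsilon$, applies a Dynkin-type identity at $T(x)$ to get $\E_y Y_{T(x)}\ge y^2+\varepsilon\,\E_y T(x)$, and then bounds $\E_y Y_{T(x)}\le x^2+c(x)+(\varepsilon_0+\varepsilon)H_y(x_0)$; your telescoping at the bounded times $N\wedge T(x)$, with the visit count to $[0,x_0]$ bounded by $H_y(x_0)$ and a final monotone-convergence pass, is the same bookkeeping in slightly different packaging.

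The step you flag as delicate, $\E_y X_{T(x)}^2\le x^2+c(x)$, is exactly where the paper is no more careful than you are: it writes $\E\{X^2_{T(x)}\mid X_{T(x)-1}\}=X^2_{T(x)-1}+2X_{T(x)-1}m_1(X_{T(x)-1})+m_2(X_{T(x)-1})\le x^2+c(x)$, i.e.\ it treats the crossing jump as an unbiased copy of $\xi(X_{T(x)-1})$ --- precisely the conditioning you distrust --- and invokes no control on jump sizes. Your suspicion is well founded: given $X_{T(x)-1}=z$ (and the crossing time), the jump is distributed as $\xi(z)$ conditioned on $\{\xi(z)>x-z\}$, which biases the increment of $z^2$ upward, and under \eqref{T.above.cond.1} and \eqref{T.above.cond.2} alone the overshoot bound (and indeed the stated inequality) can fail: take a chain which from every $z\le x$ stays put except that with small probability $p$ it jumps to a huge level $M$; then $2zm_1(z)+m_2(z)=p(M^2-z^2)$, so one may take $\varepsilon=p(M^2-x^2)$ and $c(x)=pM^2$, while $\E_yX_{T(x)}^2=M^2\gg x^2+pM^2$ and $\E_yT(x)=1/p$ exceeds the claimed bound for small $p$. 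So the extra ingredient you hoped to borrow from ``the surrounding analysis'' is not present in the paper's proof of this lemma; the overshoot is harmless only because, where the lemma is applied, the jumps satisfy additional moment conditions (e.g.\ \eqref{mom.cond.ui} or uniformly bounded moments of order $2+\delta$), and a fully rigorous version of this step would have to use them. In short: same approach as the paper, and the one gap you leave open is a genuine soft spot of the paper's own argument rather than something you could have closed from the stated hypotheses.
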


\begin{proof}
Consider the following random sequence:
$$
Y_n:=X_n^2+(\varepsilon_0+\varepsilon)
\sum_{k=0}^{n-1} \I\{X_k\le x_0\}.
$$
First, $Y_n$ is a submartingale with respect to the filtration
${\mathcal F}_n:=\sigma(X_k,k\le n)$. Indeed,
$$
Y_{n+1}-Y_n=X_{n+1}^2-X_n^2
+(\varepsilon_0+\varepsilon)\I\{X_n\le x_0\},
$$
so that
\begin{eqnarray}\label{Y.ge.eps}
\E\{Y_{n+1}-Y_n\mid{\mathcal F}_n\} &=& 2X_nm_1(X_n)+m_2(X_n)
+(\varepsilon_0+\varepsilon)\I\{X_n\le x_0\}\nonumber\\
&\ge& \varepsilon>0,
\end{eqnarray}
by the condition \eqref{T.above.cond.1}. Thus, for any $x>y$,
\begin{eqnarray}\label{EYT.below}
\E_y Y_{T(x)} &\ge& y^2+\varepsilon \E_y T(x),
\end{eqnarray}
due to the adapted version of the proof of Dynkin's formula
(see, e.g. \cite[Theorem 11.3.1]{MT}):
\begin{eqnarray*}
\E_y Y_{T(x)} &=& \E_y Y_0+\E_y\sum_{n=1}^\infty
\I\{n\le T(x)\}(Y_n-Y_{n-1})\\
&=& y^2+\E_y\sum_{n=1}^\infty
\E\{\I\{n\le T(x)\}(Y_n-Y_{n-1})\mid{\mathcal F}_{n-1}\}\\
&=& y^2+\E_y\sum_{n=1}^\infty \I\{T(x)\ge n\}
\E\{Y_n-Y_{n-1}\mid{\mathcal F}_{n-1}\},
\end{eqnarray*}
because $\I\{n\le T(x)\}\in{\mathcal F}_{n-1}$.
Hence, it follows from \eqref{Y.ge.eps} that
\begin{eqnarray*}
\E_y Y_{T(x)} &\ge& y^2
+\varepsilon\E_y\sum_{n=1}^\infty \I\{T(x)\ge n\}\\
&=& y^2+\varepsilon\sum_{n=1}^\infty \P_y\{T(x)\ge n\},
\end{eqnarray*}
and the inequality \eqref{EYT.below} follows.

On the other hand,
\begin{eqnarray}\label{EYT.upper.1}
\E_y Y_{T(x)} &=& \E_y X^2_{T(x)}
+(\varepsilon_0+\varepsilon)\E_y
\sum_{k=0}^{T(x)-1} \I\{X_k\le x_0\}.
\end{eqnarray}
Further,
\begin{eqnarray*}
\E\{X^2_{T(x)}\mid X_{T(x)-1}\} &=&
\E\{(X_{T(x)-1}+\xi(X_{T(x)-1}))^2 \mid X_{T(x)-1}\}\\
&=& X^2_{T(x)-1}+\E\{2X_{T(x)-1}m_1(X_{T(x)-1})
+m_2(X_{T(x)-1}) \mid X_{T(x)-1}\}\\
&\le& x^2+c(x),
\end{eqnarray*}
by the definition \eqref{T.above.cond.2} of $c(x)$.
Substituting this into \eqref{EYT.upper.1} we deduce
$$
\E_y Y_{T(x)}\le x^2+c(x)+(\varepsilon_0+\varepsilon)H_y(x_0),
$$
which together with \eqref{EYT.below} yields
the lemma conclusion. The proof is complete.
\end{proof}

\begin{lemma}\label{lem.exp}
Let the conditions of Lemma \ref{lem.uniform} hold and
$c(x)=O(x^2)$ in the condition \eqref{T.above.cond.2} and
\begin{eqnarray}\label{T.above.cond.3}
\sup_{y\le x_0}H_y(x_0)=\sup_{y\le x_0}
\sum_{n=0}^\infty \P_y\{X_n\le x_0\} < \infty.
\end{eqnarray}
Then there exist $c>0$ and $t_0$ such that,
for any $t>0$ and $y<x$,
$$
\P_y\{T(x)>tx^2\}\le e^{-c(t-t_0)}.
$$
\end{lemma}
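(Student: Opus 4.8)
The plan is to upgrade Lemma~\ref{lem.uniform} to a first-moment bound $\E_z T(x)\le Cx^2$ that is \emph{uniform} over all starting points $z\le x$, and then to iterate it over consecutive time windows of length of order $x^2$ by means of the Markov property, each window contributing a crossing probability at least $\tfrac12$.

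First I would establish the required uniformity. Under \eqref{T.above.cond.3} the map $y\mapsto H_y(x_0)$ is bounded on all of $\Rp$: for $y\le x_0$ this is \eqref{T.above.cond.3} itself, while for $y>x_0$ the strong Markov property applied at the first hitting time of $[0,x_0]$ gives $H_y(x_0)\le\sup_{z\le x_0}H_z(x_0)=:C_0<\infty$. Since in addition $c(x)=O(x^2)$ in \eqref{T.above.cond.2} and the term $-y^2$ is non-positive, Lemma~\ref{lem.uniform} yields a constant $C<\infty$ and a level $x_1$ with $\E_z T(x)\le Cx^2$ for every $x\ge x_1$ and every $z\le x$; the boundary case $z=x$ is covered because the proof of Lemma~\ref{lem.uniform} only uses $X_{T(x)-1}\le x$, which remains true when $X_0=x$.

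Now fix $x\ge x_1$ and put $L:=\lceil 2Cx^2\rceil$. By Markov's inequality $\P_z\{T(x)>L\}\le\E_z T(x)/L\le\tfrac12$ for all $z\le x$. On the event $\{T(x)>mL\}$ one has $X_{mL}\le x$, hence by the Markov property $\P_y\{T(x)>(m+1)L\mid\mathcal F_{mL}\}=\I\{T(x)>mL\}\,\P_{X_{mL}}\{T(x)>L\}\le\tfrac12\I\{T(x)>mL\}$; taking expectations and iterating over $m=0,\dots,k-1$ gives $\P_y\{T(x)>kL\}\le 2^{-k}$ for every $k\ge1$ and every $y<x$. Enlarging $x_1$ if necessary so that $L\le 3Cx^2$, for $t\ge 3C$ one picks $k:=\lfloor tx^2/L\rfloor\ge\lfloor t/(3C)\rfloor\ge1$ and obtains
\[
\P_y\{T(x)>tx^2\}\le\P_y\{T(x)>kL\}\le 2^{-k}\le 2\cdot 2^{-t/(3C)}=e^{-c(t-t_0)},
\]
with $c:=(\ln2)/(3C)$ and $t_0:=3C$ (so that $ct_0=\ln2$). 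For $t<t_0$ the right-hand side exceeds $1$ and the bound is trivial, and the finitely many remaining values $x<x_1$ are absorbed by enlarging $t_0$, using that $\E_z T(x)<\infty$ there as well.

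The estimate is essentially routine; the only step needing real care is the uniformity of the moment bound in the starting point, which is why we use both the strong Markov reduction of $H_y(x_0)$ to $\sup_{z\le x_0}H_z(x_0)$ and the observation that Lemma~\ref{lem.uniform} tolerates $z=x$. Beyond that the work is just bookkeeping of the constants $C$, $c$ and $t_0$, so I do not anticipate a genuine obstacle.
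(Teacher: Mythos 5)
Your proof is correct and follows essentially the same route as the paper's: extend the boundedness of $H_y(x_0)$ to all $y$ via the strong Markov property, obtain from Lemma~\ref{lem.uniform} the uniform bound $\sup_y\E_y T(x)\le c_1(x^2+1)$, and then combine Chebyshev's (Markov's) inequality with iteration of the Markov property to get geometric decay over time windows of length of order $x^2$. The paper packages the iteration as submultiplicativity of $q(t)=\sup_{y\le x}\P_y\{T(x)>tx^2\}$ rather than your explicit block-by-block argument, but the content is identical.
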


\begin{proof}
Considering the first visit to the interval $[0,x_0]$
we deduce from the condition \eqref{T.above.cond.3} that
$$
\sup_{y\ge 0}\sum_{n=0}^\infty \P_y\{X_n\le x_0\} < \infty.
$$
Thus, by Lemma \ref{lem.uniform}, there exists $c_1<\infty$
such that, for all $x$,
\begin{eqnarray}\label{T.x2.uni}
\sup_y\E_y T(x) &\le& c_1(x^2+1).
\end{eqnarray}

Next, by the Markov property, for every $t$ and $s>0$,
\begin{eqnarray*}
\P_y\{T(x)>t+s\} &=&
\int_0^x \P_y\{T(x)>t, X_t\in dz\}\P_z\{T(x)>s\}\\
&\le& \P_y\{T(x)>t\}\sup_{z\le x}\P_z\{T(x)>s\}.
\end{eqnarray*}
Therefore, the monotone function
$q(t):=\sup_{y\le x}\P_y\{T(x)>tx^2\}$
satisfies the relation $q(t+s)\le q(t)q(s)$.
Then the increasing function $r(t):=\log(1/q(t))$ is convex
and $r(0)=0$.
By the bound \eqref{T.x2.uni} and Chebyshev's inequality,
there exists $t_0$ such that $q(t_0)<1$ so that
$q(t_0)=e^{-c}$ with $c>0$, and $r(t_0)=c>0$.
Then, by $r(0)=0$ and convexity of $r$, $r(t)\ge c(t-t_0)$
which implies $q(t)\le e^{-c(t-t_0)}$.
The proof is complete.
\end{proof}

\mysection{Convergence to $\Gamma$-distribution for
transient and null-recurrent chains}
\label{sec:gamma}

In this section we are interested in the asymptotic
growth rate of a Markov chain $X_n$
that goes to infinity in distribution as $n\to\infty$.
It happens if this chain is either transient or null recurrent.
First time a limit theorem for Markov chain with asymptotically
zero drift was produced by Lamperti in \cite{Lamp62} where the convergence
to $\Gamma$-distribution was proven for the case of jumps
with all moments finite. The proof is based on the method
of moments. The results from \cite{Lamp62}
have been generalised by Klebaner \cite{Kleb89} and later by
Kersting \cite{Ker92b}. The author of the latter paper works
under the assumption that the moments of order $2+\delta$ are bounded.
But the convergence is proven on the event $\{X_n\to\infty\}$ only.
This restriction is not necessary, since Lamperti's result
allows $X_n$ to be null-recurrent, and for null-recurrent
processes we have $\P\{X_n\to\infty\}=0$.

\begin{theorem}\label{thm:gamma}
Assume that, for some $b>0$ and $\mu>-b/2$,
\begin{eqnarray}\label{1.2}
{\mathbb E}\xi(x)\sim \mu/x\ \mbox{ and }\
{\mathbb E}\xi^2(x)\to b
\quad\mbox{ as }x\to\infty
\end{eqnarray}
and that the family $\{\xi^2(x), x\ge 0\}$
possesses an integrable majorant $\Xi$, that is,
${\mathbb E}\Xi<\infty$ and
\begin{eqnarray}\label{mom.cond.ui}
\xi^2(x) &\le_{st}& \Xi
\quad\mbox{ for all }x.
\end{eqnarray}
If $X_n\to\infty$ in probability as $n\to\infty$, then
$X_n^2/n$ converges weakly to the $\Gamma$-distribution
with mean $2\mu+b$ and variance $(2\mu+b)2b$.
\end{theorem}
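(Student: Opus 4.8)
The plan is to identify the limit via Laplace transforms, exploiting the fact that $Y_n:=X_n^2$ has asymptotically \emph{constant} drift. Indeed $\E[Y_{n+1}-Y_n\mid X_n=x]=2xm_1(x)+m_2(x)\to\beta:=2\mu+b$, while the diffusion coefficient of $Y_n$ is $\approx 4x^2m_2(x)\approx 4bY_n$, so $Y_n/b$ should behave like a squared Bessel process of dimension $d:=2\mu/b+1$ (note $d/2=\mu/b+\tfrac12>0$ \emph{precisely} because $\mu>-b/2$). Its value at time $1$ has Laplace transform $(1+2b\mu_0)^{-d/2}$, which is the Laplace transform of the $\Gamma$-law with shape $d/2$ and scale $2b$, i.e.\ with mean $2\mu+b$ and variance $(2\mu+b)2b$. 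So it suffices to prove $\E e^{-\mu_0X_n^2/n}\to(1+2b\mu_0)^{-d/2}$ for every $\mu_0>0$ and then invoke the continuity theorem for Laplace transforms. One may assume $X_0$ is a fixed point, since the limit will not depend on the starting distribution.

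Fix $\mu_0>0$ and a horizon $N$, and take the space--time harmonic function of the limiting squared Bessel process,
$$
h_N(n,x):=\phi_n^{-d/2}\exp\Bigl(-\frac{\mu_0x^2/N}{\phi_n}\Bigr),\qquad \phi_n:=1+2b\mu_0\bigl(1-n/N\bigr),
$$
so that $0\le h_N\le1$, $h_N(N,x)=e^{-\mu_0x^2/N}$ and $h_N(0,x)\to(1+2b\mu_0)^{-d/2}$ as $N\to\infty$ for fixed $x$. An elementary computation shows that $g(t,y):=\phi(t)^{-d/2}\exp(-\mu_0yN^{-1}/\phi(t))$, for which $h_N(n,x)=g(n,x^2)$, solves $\partial_tg+\beta\,\partial_yg+2by\,\partial_y^2g=0$. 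The heart of the matter is to show that $M_n:=h_N(n,X_n)$ is an approximate martingale, meaning $\E M_N-\E M_0\to0$ as $N\to\infty$; combined with bounded convergence (using $X_0^2/N\to0$) this yields $\E e^{-\mu_0X_N^2/N}\to(1+2b\mu_0)^{-d/2}$. To estimate $\E[M_{n+1}-M_n\mid\mathcal F_n]$ I would expand $h_N(n+1,X_n+\xi(X_n))$ to second order in $\xi(X_n)$ with the remainder in Lagrange form --- crucially this only involves $\E\xi^2(x)$, which is dominated by $\E\Xi<\infty$ --- so that, writing $\partial_xh_N=2x\,\partial_yg$ and $\partial_x^2h_N=2\partial_yg+4x^2\partial_y^2g$, the principal terms become $\bigl(2xm_1(x)+m_2(x)\bigr)\partial_yg+2x^2m_2(x)\partial_y^2g$ together with the time increment $g(n+1,x^2)-g(n,x^2)\approx\partial_tg$; by the displayed PDE these cancel to leading order, leaving only the discrepancies $|2xm_1(x)+m_2(x)-\beta|$, $|m_2(x)-b|$ (uniformly small for large $x$ by \eqref{1.2}), the Taylor remainder, and negligible discretisation errors of order $1/N^2$.

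The main obstacle --- and the only place where we are restricted to the second--moment hypothesis \eqref{mom.cond.ui} --- is controlling the Taylor remainder and the contribution of atypical parts of the state space. I would proceed as follows. (i) Truncate the jumps at $|\xi(X_n)|\le\varepsilon X_n$: on this event $x+\theta\xi$ stays within a factor $1+\varepsilon$ of $x$, so the modulus of continuity of $\partial_x^2h_N(n+1,\cdot)$ there is $O_K(\varepsilon/N)$ on $\{x^2\le KN\}$ (all relevant derivatives of $g$ being bounded by $O(1/N)$--quantities on that set), giving a per--step remainder of order $\varepsilon/N$ after $\E[\xi^2;|\xi|\le\varepsilon x]\le\E\Xi$; the large--jump part is bounded crudely via $0\le h_N\le1$ together with $\P\{|\xi(x)|>\varepsilon x\}\le\P\{\Xi>\varepsilon^2x^2\}$ and $\E[\xi^2(x);|\xi(x)|>\varepsilon x]\le\E[\Xi;\Xi>\varepsilon^2x^2]$, both $o(1)$ as $x\to\infty$ since $\E\Xi<\infty$. (ii) To keep $X_n^2\le KN$ along the paths that matter, stop the chain at $\sigma:=\inf\{n:X_n^2>KN\}$: the error $\E M_N-\E M_{N\wedge\sigma}$ is at most $2\,\P\{\max_{n\le N}X_n^2>KN\}$, which is $O(1/K)$ by the maximal inequality applied to the supermartingale $X_n^2-Cn$ (with $C:=\sup_x(2xm_1(x)+m_2(x))<\infty$, finite by \eqref{1.2} and \eqref{mom.cond.ui}). (iii) On the bounded set $\{X_n\le R\}$, where the diffusion approximation degenerates, the per--step error is $O_R(1/N)$ while $\frac1N\sum_{n<N}\P\{X_n\le R\}\to0$ by Ces\`aro, since $X_n\to\infty$ in probability --- this is the only place that hypothesis enters. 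Summing everything gives $\limsup_N|\E M_N-\E M_0|\le O(1/K)+\varepsilon\,O_K(1)+O_K(\varepsilon_R)$ with $\varepsilon_R\to0$ as $R\to\infty$; letting $R\to\infty$, then $\varepsilon\to0$, then $K\to\infty$ completes the proof. The genuinely delicate point is this bookkeeping: the error terms are interlocked, and only the indicated order of the limits $N$, $R$, $\varepsilon$, $K$ makes all of them vanish.
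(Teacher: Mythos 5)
Your argument is correct in substance but follows a genuinely different route from the paper. The paper proves the theorem by the method of moments: it truncates the jumps at level $x\vee\sqrt n$, shows via \eqref{BK} that the truncated chain $Y_k(n)$ coincides with $X_k$ up to time $n$ with probability $1-o(1)$, and then computes all even moments $\E Y_n^{2i}(n)\sim n^i\prod_{k=1}^i(2\mu+(2k-1)b)$ by a recursive drift calculation, identifying the $\Gamma$-limit through its moments. You instead identify the limit through its Laplace transform, using the explicit space--time harmonic function $h_N(n,x)=\phi_n^{-d/2}\exp(-\mu_0x^2/(N\phi_n))$ of the limiting squared Bessel process of dimension $d=2\mu/b+1$ (your verification of the PDE $\partial_tg+\beta\partial_yg+2by\,\partial_y^2g=0$ and of the matching of mean $2\mu+b$ and variance $2b(2\mu+b)$ is correct), and show $M_n=h_N(n,X_n)$ is an approximate martingale after stopping at $X_n^2>KN$ and cutting out the set $\{X_n\le R\}$, where $X_n\to\infty$ in probability enters via Ces\`aro exactly as in the paper's use of \eqref{Y.to.infty}. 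What your route buys is that the test function is bounded with derivatives of explicit order $O_K(1/N)$, so only the second-moment majorant \eqref{mom.cond.ui} is ever invoked and no higher-moment recursion or truncation of the chain itself is needed; what the paper's route buys is that the per-step estimates are purely algebraic and the limit identification (moment problem for $\Gamma$) requires no control of a time-inhomogeneous test function.

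One step is too crude as written: you bound the contribution of the event $\{|\xi(X_n)|>\varepsilon X_n\}$ ``via $0\le h_N\le1$'', i.e.\ by $2\P\{|\xi(x)|>\varepsilon x\}\le 2\P\{\Xi>\varepsilon^2x^2\}$, which for states in the range $R<x\ll\sqrt N$ is only $o_R(1)$ per step, not $o(1/N)$; summed over $N$ steps this does not vanish (for $x$ of order $R$ it is of order $N\P\{\Xi>\varepsilon^2R^2\}$). The repair is immediate with tools already in your sketch: since $h_N(n+1,\cdot)$ is Lipschitz in $y=x^2$ with constant $\mu_0/N$, one has
\begin{eqnarray*}
\E\bigl\{|h_N(n+1,x+\xi(x))-h_N(n+1,x)|;|\xi(x)|>\varepsilon x\bigr\}
&\le& \frac{\mu_0}{N}\,\E\{2x|\xi(x)|+\xi^2(x);|\xi(x)|>\varepsilon x\}\\
&\le& \frac{\mu_0}{N}\Bigl(\frac{2}{\varepsilon}+1\Bigr)\E\{\Xi;\Xi>\varepsilon^2x^2\},
\end{eqnarray*}
which is $N^{-1}C_\varepsilon\,o_R(1)$ on $\{x>R\}$ and therefore negligible after summing over $n<N$ and letting $R\to\infty$ before $\varepsilon\to0$, consistent with your stated order of limits. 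With this correction (and the routine remark that for a general initial distribution $\E h_N(0,X_0)\to(1+2b\mu_0)^{-d/2}$ by bounded convergence), your bookkeeping closes and the proof is complete.
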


\begin{proof}
For any $n\in{\mathbb N}$,
consider a new Markov chain $Y_k(n)$, $k=0$, $1$, $2$, \ldots,
with transition probabilities depending on the parameter $n$,
whose jumps $\eta(n,x)$ are just truncations
of the original jumps $\xi(x)$ at level $x\vee \sqrt n$
depending on both point $x$ and time $n$, that is,
$$
\eta(n,x)=\min\{\xi(x),x\vee \sqrt n\}.
$$
Given $Y_0(n)=X_0$, the probability of discrepancy
between the trajectories of $Y_k(n)$ and $X_k$
until time $n$ is at most
\begin{eqnarray}\label{BK}
{\mathbb P}\{Y_k(n)\neq X_k\mbox{ for some }k\le n\}
&\le& \sum_{k=0}^{n-1}{\mathbb P}\{X_{k+1}-X_k\ge\sqrt n\}\nonumber\\
&\le& n{\mathbb P}\{\Xi\ge n\}\nonumber\\
&\le& {\mathbb E}\{\Xi;\Xi\ge n\} \to 0
\ \mbox{ as }n\to\infty.
\end{eqnarray}

Since $X_n$ converges in probability to infinity,
\eqref{BK} implies that, for every $c$,
\begin{eqnarray}\label{Y.to.infty}
\inf_{n>n_0,k\in[n_0,n]}
{\mathbb P}\{Y_k(n)>c\} &\to& 1
\quad\mbox{ as }n_0\to\infty.
\end{eqnarray}

By the choice of the truncation level,
$$
\xi(x) \ge \eta(n,x) \ge \xi(x)-\xi(x){\mathbb I}\{\xi(x)>x\}.
$$
Therefore, by the condition \eqref{mom.cond.ui},
\begin{eqnarray}\label{1.Y}
{\mathbb E}\eta(n,x)
&=& {\mathbb E}\xi(x)+o(1/x)
\quad\mbox{ as }x\to\infty \mbox{ uniformly in }n
\end{eqnarray}
and
\begin{eqnarray}\label{2.Y}
{\mathbb E}\eta^2(n,x)
&=& {\mathbb E}\xi^2(x)+o(1)
\quad\mbox{ as }x\to\infty \mbox{ uniformly in }n.
\end{eqnarray}
In addition, the inequality $\eta(n,x)\le x\vee\sqrt n$
and the condition \eqref{mom.cond.ui} imply that,
for every $j\ge 3$,
\begin{eqnarray}\label{k.Y}
{\mathbb E}\eta^j(n,x)
&=& o(x^{j-2}+n^{(j-2)/2})
\quad\mbox{ as }x\to\infty \mbox{ uniformly in }n.
\end{eqnarray}

Compute the mean of the increment of $Y_k^j(n)$.
For $j=2$ we have
\begin{eqnarray*}
{\mathbb E}\{Y^2_{k+1}(n)-Y^2_k(n)|Y_k(n)=x\}
&=& {\mathbb E}(2x\eta(n,x)+\eta^2(n,x))\\
&=& 2\mu+b+o(1)
\quad\mbox{ as }x\to\infty \mbox{ uniformly in }n,
\end{eqnarray*}
by \eqref{1.Y} and \eqref{2.Y}.
Applying now \eqref{Y.to.infty} we get
\begin{eqnarray*}
{\mathbb E}(Y^2_{k+1}(n)-Y^2_k(n)) &\to& 2\mu+b
\quad\mbox{ as }k,n\to\infty,k\le n.
\end{eqnarray*}
Hence,
\begin{eqnarray}\label{asy.2}
{\mathbb E}Y^2_n(n) &\sim& (2\mu+b)n
\quad\mbox{ as }n\to\infty.
\end{eqnarray}
Let now $j=2i$, $i\ge2$. We have
\begin{eqnarray}\label{incr.n.2i.x}
\lefteqn{{\mathbb E}\{Y^{2i}_{k+1}(n)-Y^{2i}_k(n)|Y_k(n)=x\}}\nonumber\\
&=& {\mathbb E}\Biggl(2ix^{2i-1}\eta(n,x)
+i(2i-1)x^{2i-2}\eta^2(n,x)
+\sum_{l=3}^{2i}x^{2i-l}\eta^l(n,x)\binom{2i}{l}\Biggr)\nonumber\\
&=& i[2\mu+(2i-1)b+o(1)]x^{2i-2}
+\sum_{l=3}^{2i}x^{2i-l}{\mathbb E}\eta^l(n,x)\binom{2i}{l}
\end{eqnarray}
as $x\to\infty$ uniformly in $n$,
by \eqref{1.Y} and \eqref{2.Y}. Owing to \eqref{k.Y},
\begin{eqnarray*}
\sum_{l=3}^{2i}x^{2i-l}{\mathbb E}\eta^l(n,x)\binom{2i}{l}
&=& \sum_{l=3}^{2i}x^{2i-l}o(x^{l-2}+n^{(l-2)/2})\\
&=& o(x^{2i-2})+\sum_{l=3}^{2i}x^{2i-l}o(n^{(l-2)/2})
\end{eqnarray*}
as $n\to\infty$ uniformly in $x$.
Thus,
\begin{eqnarray*}
\sum_{l=3}^{2i}x^{2i-l}{\mathbb E}\eta^l(n,Y_k(n))\binom{2i}{l}
&=& o({\mathbb E}Y_k^{2i-2}(n))
+\sum_{l=3}^{2i}{\mathbb E}Y_k^{2i-l}(n)o(n^{(l-2)/2})
\end{eqnarray*}
as $k$, $n\to\infty$, $k\le n$.
Substituting this into \eqref{incr.n.2i.x} and
taking into account \eqref{Y.to.infty}, we deduce that
\begin{eqnarray}\label{incr.n.2i}
{\mathbb E}\{Y^{2i}_{k+1}(n)-Y^{2i}_k(n)\}
&=& i[2\mu+(2i-1)b+o(1)]{\mathbb E}Y_k^{2i-2}(n)\nonumber\\
&&+\sum_{l=3}^{2i} {\mathbb E}Y_k^{2i-l}(n)o(n^{(l-2)/2}).
\end{eqnarray}
In particular, for $j=2i=4$ we get
\begin{eqnarray*}
{\mathbb E}\{Y^4_{k+1}(n)-Y^4_k(n)\}
&=& 2(2\mu+3b){\mathbb E}Y_k^2(n)
+{\mathbb E}Y_k(n) o(\sqrt n)+o(n)\\
&\sim& 2(2\mu+3b)(2\mu+b)n,
\end{eqnarray*}
due to \eqref{asy.2}. It implies that
\begin{eqnarray*}
{\mathbb E}Y^4_n(n) &\sim& (2\mu+3b)(2\mu+b)n^2
\quad\mbox{ as }n\to\infty.
\end{eqnarray*}
By induction arguments, we deduce from \eqref{incr.n.2i}
that, as $n\to\infty$,
\begin{eqnarray*}
{\mathbb E}Y^{2i}_n(n)
&\sim& n^i\prod_{k=1}^{i} (2\mu+(2k-1)b),
\end{eqnarray*}
which yields that $Y^2_n(n)/n$ weakly converges to Gamma
distribution with mean $2\mu+b$ and variance $2b(2\mu+b)$.
Together with \eqref{BK} this completes the proof.
\end{proof}

\mysection{Integral renewal theorem for transient chain}
\label{sec:renewal}

If the Markov chain $X_n$ is transient then
it visits any bounded set at most finitely many times.
The next result is devoted to the asymptotic behaviour
of the renewal functions
\begin{eqnarray*}
H_y(x) &:=& \sum_{n=0}^\infty \P_y\{X_n\le x\},\\
H(x) &:=&
\sum_{n=0}^\infty \P\{X_n\le x\}=\int H_y(x)\P\{X_0\in dy\}.
\end{eqnarray*}

\begin{lemma}\label{l:Hy.above}
Let the conditions \eqref{rec.4} and \eqref{T.above.cond.3} hold.
If
\begin{eqnarray}\label{2m1m2.1}
\sup_x(2xm_1(x)+m_2(x)) &<& \infty,\\
2xm_1(x)+m_2(x) \ge \varepsilon &>& 0\ \mbox{ ultimately in }x,
\label{2m1m2.2}
\end{eqnarray}
then there exists $c<\infty$ such that
$H_y(x)\le c(1+x^2)$ for all $y$ and $x$.
\end{lemma}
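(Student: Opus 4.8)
The plan is to bound the renewal function $H_y(x)$ by combining the up-crossing time estimate of Lemma~\ref{lem.uniform} with the return probability estimate of Lemma~\ref{est.for.return}, decomposing the trajectory of $X_n$ into excursions above and below level $x$. First I would record the consequence of \eqref{rec.4}, \eqref{2m1m2.1} and \eqref{2m1m2.2}: Lemma~\ref{est.for.return} applies (possibly after enlarging $x_0$), so that for $y>x>x_0$ the chain started at $y$ returns below $x$ with probability at most $(x/y)^\delta$, and moreover from a fixed level $2x$ (say) the number of subsequent down-crossings of level $x$ is stochastically dominated by a geometric random variable with parameter bounded away from~$1$ uniformly in $x$. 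The point of this step is that the contribution to $H_y(x)$ coming from times when the chain is \emph{above} $x$ but will later come back below $x$ is controlled by a geometric number of ``sojourns'', and each sojourn below $x$ before the next up-crossing of $x$ has expected length $O(x^2)$ by Lemma~\ref{lem.uniform}.

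The main body of the argument: fix $x>x_0$. Starting from an arbitrary $y$, I would split $H_y(x)=\sum_n\P_y\{X_n\le x\}$ according to the successive times $T_1<T_2<\cdots$ at which the chain up-crosses level $2x$ (using $2x$ rather than $x$ to get a clean geometric bound from Lemma~\ref{est.for.return}, since $(x/(2x))^\delta=2^{-\delta}<1$). Between two consecutive up-crossings of $2x$ the chain spends some time $\le x$; the expected total such time, over all excursions, is at most (expected time of the first up-crossing of $2x$) plus (expected number of returns below $x$ from $2x$) times (expected up-crossing time of $2x$ started from $\le x$). By Lemma~\ref{lem.uniform} together with \eqref{T.above.cond.3} (which, as in the proof of Lemma~\ref{lem.exp}, upgrades to a bound $\sup_y H_y(x_0)<\infty$, so the term $(\varepsilon+\varepsilon_0)H_y(x_0)$ in Lemma~\ref{lem.uniform} is $O(1)$ uniformly in $y$), each expected up-crossing time of $2x$ is at most $c_1((2x)^2+1)=O(x^2)$ uniformly in the starting point; and by Lemma~\ref{est.for.return} the number of returns below $x$ before escaping (i.e. before $X_n$ stops ever coming back below $x$) is geometric with mean $\le 1/(1-2^{-\delta})$, a constant. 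Hence the expected total time spent at or below level $x$ is $O(x^2)$, uniformly in $y$, and adding the trivial contribution from $y$ itself (if $y\le x$, the first sojourn is accounted for; if $y>x$, we first wait for a down-crossing which may never happen, only decreasing the count) gives $H_y(x)\le c(1+x^2)$.

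The step I expect to be the main obstacle is making the geometric domination of the number of returns rigorous and \emph{uniform in $x$}: Lemma~\ref{est.for.return} gives $\P\{X_n\le x \text{ for some }n\ge 1\mid X_0=y\}\le (x/y)^\delta$ only for $y>x>x_0$ with a \emph{single} $x_0$ not depending on $y$, which is exactly what is needed, but one must be careful that after a down-crossing of $x$ the chain may wander in $[0,x]$ and re-cross $x$ from below, and one has to verify (via the strong Markov property applied at the up-crossing time of $2x$, whose finiteness a.s.\ follows from Lemma~\ref{lem.uniform} giving finite expectation) that the relevant ``escape from $x$ forever'' events at successive stages are each at least $1-2^{-\delta}$-likely. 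A secondary technical point is handling small $x$ (i.e. $x\le x_0$, or $x_0<x$ but $x$ not large): there $H_y(x)\le H_y(x_0')$ for a fixed larger $x_0'$ and the bound $\sup_y H_y(x_0')<\infty$ already gives a constant bound, which is absorbed into $c(1+x^2)$. Once these are in place the estimate is immediate.
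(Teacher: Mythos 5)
Your proposal is correct and takes essentially the same route as the paper: the paper likewise combines Lemma~\ref{est.for.return} (from above level $Ax$ the chain returns below $x$ with probability at most $A^{-\delta}$) with Lemma~\ref{lem.uniform} (expected up-crossing time $O(x^2)$, made uniform in the starting point via \eqref{T.above.cond.3} as in Lemma~\ref{lem.exp}). The only difference is bookkeeping: the paper writes the one-step inequality $H_y(x)\le\E_y\sum_{n=0}^{T(Ax)-1}\I\{X_n\le x\}+A^{-\delta}\sup_{z\le x}H_z(x)$ and solves for $\sup_y H_y(x)$, i.e.\ sums your geometric series implicitly rather than iterating over successive up-crossings of $2x$.
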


\begin{proof}
Fix $A>1$. After the stopping time
$T(Ax)=\min\{n\ge 1:X_n>Ax\}$ the chain falls
down below the level $x$ with probability not higher
than $1/A^\delta$, see Lemma \ref{est.for.return}
(where the condition \eqref{rec.1} follows from
\eqref{2m1m2.1} and \eqref{2m1m2.2}).
Hence, by the Markov property, for any $y$ we have
the following upper bound
\begin{eqnarray}\label{estimate.for.Hy}
H_y(x) &\le& \E_y\sum_{n=0}^{T(Ax)-1} \I\{X_n\le x\}
+ \frac{1}{A^\delta}\sup_{z\le x}H_z(x).
\end{eqnarray}
Therefore,
\begin{eqnarray*}
\sup_{y\ge 0}H_y(x) &\le& (1-1/A^\delta)\sup_y\E_y T(Ax)\\
&\le& (1-1/A^\delta)c_1(1+x^2)
\end{eqnarray*}
for some $c_1<\infty$, by Lemma \ref{lem.uniform}
(where the condition \eqref{T.above.cond.1} follows from
\eqref{2m1m2.1} and \eqref{2m1m2.2};
also $c(x)$ is bounded in \eqref{T.above.cond.2}).
The conclusion of the lemma is proven.
\end{proof}

\begin{theorem}\label{thm:renewal}
Let the conditions \eqref{rec.4}, \eqref{T.above.cond.1},
\eqref{T.above.cond.3}, and \eqref{mom.cond.ui} hold.
If $m_1(x)\sim\mu/x$ and $m_2(x)\to b>0$ as $x\to\infty$,
and $2\mu>b$, then, for any initial distribution of the chain $X$,
$$
H(x)\sim \frac{x^2}{2\mu-b}\ \mbox{ as }x\to\infty.
$$
\end{theorem}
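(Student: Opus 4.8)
The plan is to show that the renewal function $H(x)$ satisfies a renewal-type equation whose solution is asymptotically $x^2/(2\mu-b)$, via the second-moment test function $V(x)=x^2$ together with the quantitative escape estimates of Section~\ref{sec:escape}. The heuristic is that $X_n^2$ grows linearly with rate $2\mu+b$ (Theorem~\ref{thm:gamma}), so that the chain spends on the order of $x^2/(2\mu+b)$ units of time below... no: one must be careful. The correct identity comes from the optional-stopping/Dynkin computation already used in Lemma~\ref{lem.uniform}: if $V(x)=x^2$ then $\E\{V(X_{n+1})-V(X_n)\mid X_n\}=2X_nm_1(X_n)+m_2(X_n)$, which tends to $2\mu+b$ on $\{X_n\to\infty\}$ but the \emph{overshoot-corrected} bookkeeping over excursions below a level $x$ produces the constant $2\mu-b$ rather than $2\mu+b$. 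I would make this precise as follows.

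First I would fix a large level $x$ and run the chain from an arbitrary starting point up to $T(x)=\min\{n\ge1:X_n>x\}$; by Lemma~\ref{lem.exp} (whose hypotheses follow from \eqref{rec.4}, \eqref{T.above.cond.1}, \eqref{T.above.cond.3}) we have $\E_y T(x)\le c_1(1+x^2)$ uniformly in $y$, and by Lemma~\ref{l:Hy.above} the truncated renewal mass $\E_y\sum_{n<T(Ax)}\I\{X_n\le x\}$ is $O(x^2)$. The key step is to compare $H(x)$ with $\E T(x)$: using the strong Markov property at successive up-crossings of level $x$ and the return estimate $\P_z\{X_n\le x\text{ for some }n\}\le(x/y)^\delta$ from Lemma~\ref{est.for.return}, only finitely many returns below $x$ occur and each contributes a further $O(x^2)$ block, so $H(x)=\E_{\pi_0}[\text{time below }x]$ is, up to lower-order terms, the time until final escape above $x$. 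Then I would apply the Dynkin formula to $Y_n=X_n^2-\sum_{k<n}(2X_km_1(X_k)+m_2(X_k))$ stopped at the final escape time: since $2xm_1(x)+m_2(x)\to 2\mu+b$ while the overshoot $\E_y X_{T(x)}^2-x^2$ contributes (via the uniform-integrability majorant \eqref{mom.cond.ui}) a term that, after accounting for the \emph{downcrossings}, shifts the effective rate from $2\mu+b$ to $2\mu-b$, one gets $x^2\sim(2\mu-b)\,\E[\text{final escape time}]\sim(2\mu-b)H(x)$.

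More concretely, the cleanest route is a sandwich argument. For the upper bound, from \eqref{estimate.for.Hy} with $A\to\infty$ slowly and the uniform bound $\E_yT(Ax)\le x^2+c(Ax)+\dots$ divided by $\varepsilon$ from Lemma~\ref{lem.uniform} — but replacing the crude lower bound $2zm_1(z)+m_2(z)\ge\varepsilon$ by its true asymptotic value $2\mu+b+o(1)$ only on $\{z\text{ large}\}$ and controlling the contribution of $\{z\le x_0\}$ by \eqref{T.above.cond.3} — one extracts $\limsup H(x)/x^2\le 1/(2\mu-b)$; the gap between $2\mu+b$ and $2\mu-b$ arises because in the renewal count each visit to a level near $x$ is followed by an excursion that returns, and the expected number of such returns is governed by $b/(2\mu)$ in the regular-variation scale, exactly as in the Menshikov–Popov exponent $2\mu/b$. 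For the lower bound I would run a matching submartingale/supermartingale pair, exploiting that on $\{X_n>x_1\}$ the drift of $X_n^2$ is arbitrarily close to $2\mu+b$ from both sides, and invoke Theorem~\ref{thm:gamma} (applicable since a transient or null-recurrent chain has $X_n\to\infty$ in probability) to pin the constant.

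The main obstacle is precisely identifying the constant $2\mu-b$ rather than $2\mu+b$: the naive Dynkin computation gives $\E X_{\sigma}^2\approx(2\mu+b)\E\sigma$ for an escape time $\sigma$, so the factor must come from carefully separating up-crossings from down-crossings of the level $x$ and summing a geometric-type series of return contributions whose ratio involves $b$ and $\mu$ through the regularly-varying potential — equivalently, from the overshoot distribution at level $x$ having a nondegenerate limit that must be integrated against. Making this rigorous uniformly in $x$, with the error terms genuinely $o(x^2)$ and not merely $O(x^2)$, is where the uniform integrability hypothesis \eqref{mom.cond.ui} and the escape-speed bounds of Lemmas~\ref{est.for.return}--\ref{lem.exp} must be combined delicately; I expect this to be the technical heart of the argument.
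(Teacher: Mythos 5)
There is a genuine gap: the step that actually produces the constant $1/(2\mu-b)$ is missing, and the mechanism you propose for it is not the right one. In the paper the constant does not come from overshoot or down-crossing corrections to a Dynkin/optional-stopping identity; it comes from summing the limiting probabilities over time. Once transience is established (Lemma \ref{lem.uniform} shows $T(x)<\infty$ a.s., hence \eqref{rec.2}, and then Theorem \ref{thm:transience} applies), Theorem \ref{thm:gamma} gives $X_n^2/n\Rightarrow\Gamma$ with mean $2\mu+b$ and variance $2b(2\mu+b)$, and one writes $H_y(x)=\sum_n\P_y\{X_n\le x\}$, approximates the first $[Bx^2]$ terms by the limiting value $\Gamma(x^2/n)$, and obtains $\sum_{n\le Bx^2}\Gamma(x^2/n)\sim x^2\int_0^B\Gamma(1/z)\,dz$, where $\int_0^\infty\Gamma(1/z)\,dz=\E[1/\Gamma]=1/(2\mu-b)$ (the limit law has scale $2b$ and shape $(2\mu+b)/(2b)$). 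The tails are controlled by exactly the lemmas you cite: the times $n\in(Bx^2,T(Ax))$ are killed by the exponential bound of Lemma \ref{lem.exp} (with $B$ of order $A^3$), and the visits below $x$ occurring after $T(Ax)$ contribute at most $A^{-\delta}\sup_z H_z(x)=O(A^{-\delta})x^2$ by Lemma \ref{est.for.return} combined with Lemma \ref{l:Hy.above}; letting $A\to\infty$ yields both the upper and the lower bound.

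Your route instead tries to extract the constant from $\E\{X_{n+1}^2-X_n^2\mid X_n\}\to 2\mu+b$ plus ``overshoot/down-crossing bookkeeping'' and a ``geometric-type series of returns near level $x$ with ratio involving $b$ and $\mu$''. That cannot work as stated: the Dynkin computation pins the first-passage time (overshoot being negligible under \eqref{mom.cond.ui}) to roughly $x^2/(2\mu+b)$, and the excess of the occupation time $H(x)$ over $\E T(x)$ is not produced by local returns around level $x$ --- those are precisely the $O(A^{-\delta})x^2$ error term, negligible after $A\to\infty$ --- but by atypically slow trajectories, i.e.\ by the mass of the Gamma limit near $0$; this is why $\E[1/\Gamma]$, and not any excursion count or overshoot integral, is the quantity that yields $2\mu-b$. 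You do mention invoking Theorem \ref{thm:gamma} ``to pin the constant,'' but no computation converting the limit law into the value $1/(2\mu-b)$ is given, and you yourself flag the identification of the constant as unresolved; as it stands the proposal does not prove the theorem.
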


\begin{proof}
Fix an arbitrary $y$. It follows from Lemma \ref{lem.uniform}
that $T(x)$ is finite a.s. for every $x$, so that the condition
\eqref{rec.2} holds and, by Theorem \ref{thm:transience},
$X_n\to\infty$ a.s. as $n\to\infty$. Then we may apply
Theorem \ref{thm:gamma} and state that $X_n^2/n$
weakly convergences to the $\Gamma$-distribution
with mean $2\mu+b$ and variance $(2\mu+b)2b$.
Thus, for every fixed $B$,
\begin{eqnarray*}
\sum_{n=0}^{[Bx^2]}\P_y\{X_n\le x\}
&=& \sum_{n=0}^{[Bx^2]}(\Gamma(x^2/n)+o(1))\\
&=& \sum_{n=0}^{[Bx^2]}\Gamma(x^2/n)+o(x^2).
\end{eqnarray*}
as $x\to\infty$. Since
\begin{eqnarray*}
\sum_{n=0}^{[Bx^2]}\Gamma(x^2/n) &\sim&
x^2\int_0^B\Gamma(1/z)dz\ \mbox{ as }x\to\infty
\end{eqnarray*}
and
\begin{eqnarray*}
\int_0^B\Gamma(1/z)dz &\to& \frac{1}{2\mu-b}\ \mbox{ as }B\to\infty,
\end{eqnarray*}
we conclude the lower bound
\begin{eqnarray}\label{bound.Hy.lower}
\liminf_{x\to\infty}\frac{H_y(x)}{x^2} &\ge& \frac{1}{2\mu-b}.
\end{eqnarray}

Let us now prove the upper bound
\begin{eqnarray}\label{bound.Hy.upper}
\limsup_{x\to\infty}\frac{H_y(x)}{x^2} &\le& \frac{1}{2\mu-b}.
\end{eqnarray}
Applying the upper bound of Lemma \ref{l:Hy.above}
on the right side of \eqref{estimate.for.Hy} we deduce that
\begin{eqnarray}\label{estimate.for.Hy.2}
H_y(x) &\le& \E_y\sum_{n=0}^{T(Ax)-1} \I\{X_n\le x\}
+ \frac{c}{A^\delta}(1+x^2).
\end{eqnarray}
For any $B$, the mean of the sum on the right of may be estimated
as follows:
\begin{eqnarray*}
\E_y\sum_{n=0}^{T(Ax)-1} \I\{X_n\le x\}
&\le& \E_y\Bigl\{\sum_{i=0}^{T(Ax)-1} \I\{X_n\le x\};
T(Ax)\le Bx^2\Bigr\}\\
&&\hspace{40mm}+\E_y\{T(Ax); T(Ax)>Bx^2\}.
\end{eqnarray*}
To estimate the second term we apply Lemma~\ref{lem.exp}
which yields
\begin{eqnarray*}
\E_y\{T(Ax); T(Ax)>Bx^2\}
&=& (Ax)^2\E_y\Bigl\{\frac{T(Ax)}{(Ax)^2};
\frac{T(Ax)}{(Ax)^2}>\frac{B}{A^2}\Bigr\}\\
&\le& (Ax)^2(B/A^2+1/c)e^{-c(B/A^2-t_0)}.
\end{eqnarray*}
Taking $B=2A^3$ we can ensure that
$$
\E_y\{T(Ax); T(Ax)>Bx^2\}\le c_1 e^{-cA} x^2.
$$
Hence,
\begin{eqnarray*}
H_y(x) &\le&
\E_y\Bigl\{\sum_{n=0}^{T(Ax)-1} \I\{X_n\le x\}; T(Ax)\le Bx^2\Bigr\}
+x^2O(A^{-\delta})\\
&\le& \sum_{n=0}^{[Bx^2]}\P_y\{X_n\le x\}+x^2O(A^{-\delta}).
\end{eqnarray*}
As already shown,
\begin{eqnarray*}
\sum_{n=0}^{[Bx^2]}\P_y\{X_n\le x\}
&=& x^2\int_0^B\Gamma(1/z)dz+o(x^2)\ \mbox{ as }x\to\infty,
\end{eqnarray*}
which implies the required upper bound \eqref{bound.Hy.upper}.
The lower \eqref{bound.Hy.lower} and upper \eqref{bound.Hy.upper}
bounds yield the equivalence, for every fixed $y$,
$$
H_y(x)\sim \frac{x^2}{2\mu-b}\ \mbox{ as }x\to\infty.
$$
Together with the uniform in $y$ estimate of Lemma
\ref{l:Hy.above} this completes the proof.
\end{proof}

\mysection{Construction of harmonic function}
\label{sec:harmony}

The Markov chain $X_n$ is assumed to be positive recurrent
with invariant measure $\pi$. Let $B$ be a Borel set in
$\Rp$ with $\pi(B)>0$; in our applications we consider
an interval $[0,x_0]$.
Denote $\tau_B:=\min\{n\ge 1:X_n\in B\}$. Since $X_n$ is
positive recurrent and $\pi(B)>0$,
$\E_x\tau_B<\infty$ for every $x$.

In this section we construct a {\it harmonic function} for $X_n$
killed at the time of the first visit to $B$, that is,
such a function $V(x)$ that, for every $x$,
$$
V(x)=\E_x\{V(X_1);X_1\notin B\}
\quad (=\E\{V(x+\xi(x));x+\xi(x)\notin B\}).
$$
If $V$ is harmonic then
\begin{eqnarray}\label{harm.n}
V(x)=\E_x\{V(X_n);\tau_B>n\}\ \mbox{ for every }n.
\end{eqnarray}
For any function $U(x):\Rp\to\R$,
denote its mean drift function by
$$
u(x):=\E_xU(X_1)-U(x)=\E U(x+\xi(x))-U(x).
$$

\begin{lemma}\label{l:V.harm}
Let $U\ge 0$, $U$ be zero on $B$, and
\begin{eqnarray}\label{Ex.for.sum.tau}
\E_x\sum_{n=0}^{\tau_B-1}(u(X_n))^+<\infty
\ \mbox{ for every }x.
\end{eqnarray}
Then the function
$$
V(x):=U(x)+\E_x\sum_{n=0}^{\tau_B-1}u(X_n)
$$
is well-defined, nonnegative and harmonic.
\end{lemma}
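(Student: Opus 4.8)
The plan is to verify directly that the proposed $V$ satisfies the harmonicity identity $V(x)=\E_x\{V(X_1);X_1\notin B\}$, treating the cases $x\in B$ and $x\notin B$ separately, and to justify the requisite interchange of expectation and summation using hypothesis \eqref{Ex.for.sum.tau}. First I would observe that $V$ is well-defined: writing $u=u^+-u^-$, the positive part of the sum has finite expectation by \eqref{Ex.for.sum.tau}, so $\E_x\sum_{n=0}^{\tau_B-1}u(X_n)$ is well-defined with values in $[-\infty,\infty)$; combined with $U\ge0$ this gives $V(x)>-\infty$. Nonnegativity is less immediate and I would postpone it, deriving it as a consequence of the harmonic identity rather than proving it upfront.

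Next I would establish the key decomposition. For $x\notin B$, condition on $X_1$: by the Markov property and the strong Markov property at time $1$,
\begin{eqnarray*}
\E_x\sum_{n=0}^{\tau_B-1}u(X_n)
&=& u(x)+\E_x\Bigl\{\sum_{n=1}^{\tau_B-1}u(X_n)\Bigr\}\\
&=& u(x)+\E_x\Bigl\{\I\{X_1\notin B\}\,\E_{X_1}\sum_{n=0}^{\tau_B-1}u(X_n)\Bigr\},
\end{eqnarray*}
where on $\{X_1\in B\}$ we have $\tau_B=1$ and the sum from $n=1$ is empty. The interchange needed to split off the $n=0$ term and re-index is justified because $\E_x\sum_{n=0}^{\tau_B-1}u^+(X_n)<\infty$ by \eqref{Ex.for.sum.tau}, so Fubini applies to the positive part and the negative part is handled by monotone convergence (its expectation may be $+\infty$, but then both sides are $-\infty$ consistently). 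Adding $U(x)$ to both sides and using $u(x)=\E_x U(X_1)-U(x)$, together with $U\equiv0$ on $B$ so that $\E_x U(X_1)=\E_x\{U(X_1);X_1\notin B\}$, yields
$$
V(x)=\E_x\{U(X_1);X_1\notin B\}+\E_x\Bigl\{\I\{X_1\notin B\}\E_{X_1}\sum_{n=0}^{\tau_B-1}u(X_n)\Bigr\}
=\E_x\{V(X_1);X_1\notin B\},
$$
which is exactly the harmonic identity. For $x\in B$ there is nothing to prove since harmonicity of $V$ killed at $B$ is a statement about $x\notin B$ only (the identity as literally written, $V(x)=\E_x\{V(X_1);X_1\notin B\}$, should be checked to hold or be irrelevant for $x\in B$; in any case $U(x)=0$ there and the sum is empty, so $V(x)=0$ on $B$, consistent with the convention).

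Finally, for nonnegativity: iterating the harmonic identity (which is \eqref{harm.n}) gives $V(x)=\E_x\{V(X_n);\tau_B>n\}$ for all $n$; since $V(x)=U(x)+(\text{a sum that is eventually telescoping into }U)$, a cleaner route is to note $V(x)=\E_x\{U(X_n);\tau_B>n\}+\E_x\{\sum_{k=0}^{\tau_B-1}u(X_k)\I\{\tau_B>n\}\}$ and let $n\to\infty$; since $U\ge0$ and the tail sum vanishes (the number of terms with $\tau_B>n$ goes to zero on $\{\tau_B<\infty\}$, which has full measure, and dominated convergence applies via \eqref{Ex.for.sum.tau} on the positive part), one gets $V(x)\ge 0$. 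The main obstacle is the careful bookkeeping of the interchange of sum and expectation when $\E_x\sum u^-(X_n)$ may be infinite; the cleanest fix is to first prove everything assuming $\E_x\sum|u(X_n)|<\infty$ and then remove this by a truncation/monotone-convergence argument, or simply to observe that if the negative part has infinite expectation then $V(x)=-\infty$, contradicting $V(x)>-\infty$, so in fact absolute convergence is automatic once one knows $V$ is finite — which follows from the harmonic representation $V(x)=\E_x\{V(X_n);\tau_B>n\}\le$ a finite quantity under the standing recurrence assumptions.
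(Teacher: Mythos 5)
Your harmonicity computation for $x\notin B$ is essentially the paper's one-step Markov decomposition, but the substantive part of the lemma under the stated hypothesis --- that only the \emph{positive} parts are assumed integrable --- is never established. Concretely, you never prove $\E_x\sum_{n=0}^{\tau_B-1}(u(X_n))^-<\infty$, i.e.\ $V(x)>-\infty$. Your early claim that ``combined with $U\ge0$ this gives $V(x)>-\infty$'' is a non sequitur: if the negative part has infinite expectation then $V(x)=U(x)+(-\infty)=-\infty$ regardless of $U\ge0$. The later attempted repairs are circular: you invoke ``contradicting $V(x)>-\infty$'' when that was never shown, and the bound $V(x)=\E_x\{V(X_n);\tau_B>n\}\le$ ``a finite quantity'' is both unjustified and derived from the very identity whose validity (in a non-trivial, finite sense) is in question; likewise the nonnegativity argument needs the negative part of the tail sum to vanish, which again requires exactly the missing integrability. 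The idea you are missing is the paper's telescoping identity: since $\I\{\tau_B>n\}\in\mathcal F_n$ and $U$ vanishes on $B$,
\begin{eqnarray*}
\E_x\sum_{n=0}^{(\tau_B-1)\wedge N}u(X_n)
&=& \E_x\sum_{n=0}^N \bigl(U(X_{n+1})\I\{\tau_B>n+1\}-U(X_n)\I\{\tau_B>n\}\bigr)\\
&=& \E_x\{U(X_{N+1});\tau_B>N+1\}-U(x)\ \ge\ -U(x),
\end{eqnarray*}
uniformly in $N$. Letting $N\to\infty$ (monotone convergence separately on $u^+$ and $u^-$, using $\tau_B<\infty$ a.s.) gives $U(x)+\E_x\sum_{n=0}^{\tau_B-1}u(X_n)=\lim_N\E_x\{U(X_{N+1});\tau_B>N+1\}\ge0$, which simultaneously yields finiteness of the negative part (hence well-definedness of $V$) and $V\ge0$; only after this is the one-step harmonicity computation meaningful as an identity between finite quantities.

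A secondary error: for $x\in B$ the sum is \emph{not} empty, because $\tau_B=\min\{n\ge1:X_n\in B\}\ge1$, so the $n=0$ term $u(x)$ is always present and $V$ is in general nonzero on $B$ (indeed the paper later needs $\int_B\pi(dz)V(z)>0$). The harmonic identity $V(x)=\E_x\{V(X_1);X_1\notin B\}$ is asserted for every $x$, and the paper's computation covers $x\in B$ as well, using only $\E_x\{U(X_1);X_1\notin B\}=\E_xU(X_1)=U(x)+u(x)$ and the fact that the sum from $n=1$ is empty on $\{X_1\in B\}$; your ``nothing to prove / $V=0$ on $B$'' shortcut is incorrect.
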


\begin{proof}
The condition \eqref{Ex.for.sum.tau} and the finiteness of
$\E_x\tau_B$ ensure that
\begin{eqnarray}\label{V.N}
\E_x\sum_{n=0}^{\tau_B-1}u(X_n) &=&
\lim_{N\to\infty}\E_x\sum_{n=0}^{(\tau_B-1)\wedge N}u(X_n).
\end{eqnarray}
Let ${\mathcal F}_n=\sigma\{X_0,\ldots,X_n\}$. We have
\begin{eqnarray*}
\E_x\sum_{n=0}^{(\tau_B-1)\wedge N}u(X_n)
&=& \E_x\sum_{n=0}^N u(X_n)\I\{\tau_B>n\}\\
&=& \E_x\sum_{n=0}^N \E\{U(X_{n+1})-U(X_n)\mid {\mathcal F}_n\}
\I\{\tau_B>n\}\\
&=& \E_x\sum_{n=0}^N \E\{(U(X_{n+1})-U(X_n))\I\{\tau_B>n\}\mid {\mathcal F}_n\},
\end{eqnarray*}
because $\I\{\tau_B>n\}\in{\mathcal F}_n$. By the fact that
$U$ is zero on $B$, we deduce that
$U(X_{n+1})\I\{\tau_B=n+1\}=0$ so that
\begin{eqnarray*}
\E_x\sum_{n=0}^{(\tau_B-1)\wedge N}u(X_n)
&=& \E_x\sum_{n=0}^N (U(X_{n+1})\I\{\tau_B>n+1\}-U(X_n)\I\{\tau_B>n\})\\
&=& \E_x U(X_{N+1})\I\{\tau_B>N+1\}-U(x),
\end{eqnarray*}
which together with \eqref{V.N} implies that
\begin{eqnarray}\label{V.nonneg}
U(x)+\E_x\sum_{n=0}^{\tau_B-1}u(X_n) &=&
\lim_{N\to\infty} \E_x U(X_{N+1})\I\{\tau_B>N+1\}.
\end{eqnarray}
The latter limit is nonnegative, since $U\ge 0$.
Together with the condition \eqref{Ex.for.sum.tau}
it implies that the mean of the left of \eqref{V.N}
is finite and the function $V$ is well-defined and,
as the representation \eqref{V.nonneg} shows, nonnegative.
(Also, nonnegativity follows from Theorem 14.2.2 from \cite{MT}
but we here produced self-contained short proof.)

Now prove that $V$ is harmonic. Since $U$ is zero on $B$,
$$
\E_x\{U(X_1);X_1\notin B\}=\E U(X_1)=U(x)+u(x).
$$
Therefore,
\begin{eqnarray*}
\E_x\{V(X_1);X_1\notin B\} &=& \E_x\{U(X_1);X_1\notin B\}
+\E_x\Bigl\{\E\Bigl\{\sum_{n=1}^{\tau_B-1}u(X_n)\Big|X_1\Bigr\};
X_1\notin B\Bigr\}\\
&=& U(x)+u(x)+\E_x\Bigl\{\E\Bigl\{\sum_{n=1}^{\tau_B-1}u(X_n)\I\{X_1\notin B\}
\Big| X_1\Bigr\}\Bigr\}\\
&=& U(x)+u(x)+\E_x\sum_{n=1}^{\tau_B-1}u(X_n)\I\{X_1\notin B\}\\
&=& U(x)+u(x)+\E_x\sum_{n=1}^{\tau_B-1}u(X_n)=V(x),
\end{eqnarray*}
so that $V$ is harmonic which completes the proof.
\end{proof}

\begin{lemma}\label{l:VUUtilde}
Suppose the functions $U_1$ and $U_2$ are both
locally bounded, equal to zero on $B$, positive on the
complement of $B$ and $U_1(x)\sim U_2(x)$ as $x\to\infty$.
If both satisfy the condition \eqref{Ex.for.sum.tau},
then $V_1(x)=V_2(x)$ for all $x$.
\end{lemma}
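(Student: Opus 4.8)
The plan is to reduce the claim $V_1=V_2$ to showing that the harmonic function built from the difference $U_1-U_2$ vanishes identically, and then to exploit the representation \eqref{V.nonneg} from Lemma \ref{l:V.harm} to identify $V_i(x)$ as a limit of expectations of $U_i$ along the killed chain. First I would record that, by Lemma \ref{l:V.harm},
$$
V_i(x)=\lim_{N\to\infty}\E_x U_i(X_{N+1})\I\{\tau_B>N+1\},\qquad i=1,2,
$$
provided each $U_i$ satisfies \eqref{Ex.for.sum.tau}, which is assumed. So it suffices to prove that
$$
\lim_{N\to\infty}\E_x\bigl(U_1(X_{N+1})-U_2(X_{N+1})\bigr)\I\{\tau_B>N+1\}=0
\quad\mbox{ for every }x.
$$

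The key step is to control the tail contribution using the hypothesis $U_1(x)\sim U_2(x)$ together with the already-established finiteness in \eqref{V.nonneg}. Fix $\varepsilon>0$ and choose $x_1>x_0$ so large that $|U_1(y)-U_2(y)|\le\varepsilon\,U_2(y)$ for all $y>x_1$ (and use the positivity of $U_2$ off $B$). Split the expectation according to whether $X_{N+1}\le x_1$ or $X_{N+1}>x_1$. On $\{X_{N+1}>x_1\}$ the integrand is bounded by $\varepsilon\,U_2(X_{N+1})\I\{\tau_B>N+1\}$, whose expectation is at most $\varepsilon\,V_2(x)$ by the monotone convergence argument behind \eqref{V.nonneg} (more precisely, $\E_x U_2(X_{N+1})\I\{\tau_B>N+1\}$ is nonnegative and converges to $V_2(x)$, hence is bounded). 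On $\{X_{N+1}\le x_1\}\cap\{\tau_B>N+1\}$ both $U_1$ and $U_2$ are bounded by local boundedness, say by a constant $C(x_1)$, so this part is at most $2C(x_1)\,\P_x\{\tau_B>N+1\}$, which tends to $0$ as $N\to\infty$ because $\E_x\tau_B<\infty$. Letting $N\to\infty$ and then $\varepsilon\to0$ gives the claim.

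I expect the main obstacle to be the careful handling of the $\{X_{N+1}>x_1\}$ part: one needs a uniform-in-$N$ domination of $U_2(X_{N+1})\I\{\tau_B>N+1\}$ in order to pass the comparison $|U_1-U_2|\le\varepsilon U_2$ through the expectation, and the natural domination comes precisely from the finiteness half of Lemma \ref{l:V.harm} (the limit in \eqref{V.nonneg} is finite, and the pre-limit quantities are bounded since $U_2\ge0$). A secondary point requiring attention is that the splitting at level $x_1$ must be done \emph{inside} the event $\{\tau_B>N+1\}$ so that on the low part we may use local boundedness of $U_i$ on the compact $[0,x_1]$; there is no issue with the values of $U_i$ on $B$ itself since those are zero and $\{X_{N+1}\in B\}$ is excluded by $\{\tau_B>N+1\}$. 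Once these two domination points are in place the argument is a routine $\varepsilon$-estimate.
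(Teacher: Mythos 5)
Your argument is correct and is essentially the paper's own proof: both start from the representation $V_k(x)=\lim_{N\to\infty}\E_x\{U_k(X_{N+1});\tau_B>N+1\}$ furnished by Lemma~\ref{l:V.harm}, split at a large level, kill the compact part via local boundedness and $\P_x\{\tau_B>N+1\}\to0$, and handle the tail part via $U_1\sim U_2$ (the paper writes the two-sided bound $(1\pm\varepsilon)U_1\le U_2$ where you write $|U_1-U_2|\le\varepsilon U_2$, an immaterial difference). The only cosmetic imprecision is the phrase ``at most $\varepsilon V_2(x)$'' for the pre-limit expectation, but your parenthetical boundedness remark already supplies the correct justification.
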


\begin{proof}
As stated in the previous proof, the condition \eqref{Ex.for.sum.tau}
and the finiteness of $\E_x\tau_B$ ensure that
\begin{eqnarray}\label{V.nonneg.1}
V_k(x) &=& \lim_{N\to\infty} \E_x\{U_k(X_{N+1});\tau_B>N+1\},\quad k=1,\ 2.
\end{eqnarray}
It suffices to prove that the limit in \eqref{V.nonneg.1}
is the same for $k=1$, $2$. Indeed, for every $A$,
\begin{eqnarray*}
\E_x\{U_k(X_{N+1});\tau_B>N+1\}
&=& \E_x\{U_k(X_{N+1});\tau_B>N+1,X_{N+1}\le A\}\\
&&\hspace{10mm} +\E_x\{U_k(X_{N+1});\tau_B>N+1,X_{N+1}>A\}.
\end{eqnarray*}
The first expectation on the right is not greater than
$$
\sup_{x\le A}U_k(x)\P_x\{\tau_B>N+1\}\to 0
\quad\mbox{ as }N\to\infty,
$$
because $U_k$ is locally bounded. As far as we consider
the second expectation, for every $\varepsilon>0$ the exists
sufficiently large $A$ such that
$$
(1-\varepsilon)U_1(x)\le U_2(x)\le(1+\varepsilon)U_1(x)
$$
and then
\begin{eqnarray*}
\lefteqn{(1-\varepsilon)\E_x\{U_1(X_{N+1});\tau_B>N+1,X_{N+1}>A\}}\\
&&\hspace{15mm} \le \E_x\{U_2(X_{N+1});\tau_B>N+1,X_{N+1}>A\}\\
&&\hspace{40mm} \le (1+\varepsilon)\E_x\{U_1(X_{N+1});\tau_B>N+1,X_{N+1}>A\}.
\end{eqnarray*}
These observations prove that the limits in \eqref{V.nonneg.1}
are equal for $k=1$, $2$ and the proof is complete.
\end{proof}

\mysection{Proof of Theorem \ref{thm:tail}}
\label{sec:proof}

Fix $x_0$ as in \eqref{def.B}. Consider the following function
$U$: $U=0$ on $[0,x_0]$ and
\begin{equation}\label{def.u}
U(x):=\int_{x_0}^x e^{R(y)}dy\ \mbox{ for } x\ge 0,
\ \mbox{ where }R(y)=\int_0^y r(z)dz.
\end{equation}
Note that the function $U$ solves the equation $U''-rU'=0$.
In other words, $U$ is harmonic function for a diffusion with drift
$r(x)$ and diffusion coefficient $1$ killed at leaving $(x_0,\infty)$.
According to our assumptions,
$$
r(z)=\frac{2\mu}{b}\frac{1}{z}+\frac{\varepsilon(z)}{z},
$$
where $\varepsilon(z)\to0$ as $z\to\infty$.
In view of the representation theorem, there exists a slowly
varying at infinity function $\ell(x)$ such that
$e^{R(x)}=x^{\rho-1}\ell(x)$
and $U(x)\sim x e^{R(x)}/\rho\sim x^\rho \ell(x)/\rho$ where $\rho=2\mu/b+1>2$.

For every $C\in\R$, define $U_C(x)=0$ on $[0,x_0]$ and
$$
U_C(x)=U(x)+Ce^{R(x)}\quad\mbox{ for }x>x_0.
$$

\begin{lemma}\label{L.harm2}
Assume the conditions of Theorem {\rm\ref{thm:tail}} hold. Then
$$
\E U_C(x+\xi(x))-U_C(x)
=\bigl((\rho-1)b(C_0-C)/2+o(1)\bigr)e^{R(x)}/x^2
\quad\mbox{ as }x\to\infty,
$$
where $C_0:=m_3(\rho-2)/3b$.
\end{lemma}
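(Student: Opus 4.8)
The plan is to compute the mean drift $\E U_C(x+\xi(x))-U_C(x)$ by a third-order Taylor expansion of $U_C$ around $x$ and to track the contributions of $m_1(x)$, $m_2(x)$ and $m_3$ at the precise order $e^{R(x)}/x^2$. First I would record the derivatives of the smooth part of $U_C$ on $(x_0,\infty)$: writing $g(x):=e^{R(x)}$, we have $U_C'(x)=g(x)+Cg'(x)$, and since $g'=rg$ one gets $U_C'(x)=(1+Cr(x))g(x)$, $U_C''(x)=(r(x)+Cr'(x)+Cr^2(x))g(x)$, $U_C'''(x)=(r'(x)+r^2(x)+O(C/x^3))g(x)$. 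Using $r(x)\sim(\rho-1)/x$, $r'(x)\sim-(\rho-1)/x^2$ (which is exactly the hypothesis $r'(x)\sim-2\mu/(bx^2)$ together with $\rho-1=2\mu/b$), these simplify to $U_C'(x)=g(x)(1+o(1))$, $U_C''(x)=g(x)\bigl((\rho-1)/x+o(1/x)\bigr)$ after noting $Cr'(x)+Cr^2(x)=O(1/x^2)$, and $U_C'''(x)=g(x)\bigl((\rho-1)(\rho-2)/x^2+o(1/x^2)\bigr)$, where the latter uses $r'(x)+r^2(x)\sim(\rho-1)(\rho-2)/x^2$.

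Next I would split the expectation according to whether $\xi(x)$ is large. On the event $\{\xi(x)\le Ax\}$ I would apply Taylor's formula to third order with Lagrange remainder,
\begin{eqnarray*}
U_C(x+\xi(x))-U_C(x)
&=& U_C'(x)\xi(x)+\tfrac12 U_C''(x)\xi^2(x)
+\tfrac16 U_C'''(x+\theta\xi(x))\xi^3(x),
\end{eqnarray*}
with $0\le\theta\le1$, being careful near $x_0$ where $U_C$ is only Lipschitz; the contribution from $\{x+\xi(x)\le x_0\}$ is $O(g(x)/x)\cdot\P\{\xi(x)\le -\gamma x\}$, which by the tail assumption coming from \eqref{r-cond.2} and \eqref{moment_cond1} is negligible relative to $g(x)/x^2$. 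Taking expectations and using $m_1(x)=\tfrac12 b\, r(x)+O(1/x^{2+\delta})$ (a restatement of \eqref{r-cond.2} via $2m_1/m_2=-r+O(1/x^{2+\delta})$ and $m_2(x)\to b$, so that the leading $U_C'(x)m_1(x)$ term and half of $U_C''(x)m_2(x)$ interact) one sees the $1/x$ terms cancel and the surviving terms are of order $g(x)/x^2$. Collecting: the $\xi^2$ term contributes $\tfrac12 U_C''(x)m_2(x)\sim\tfrac12(\rho-1)b\, g(x)/x^2$ plus a correction $\tfrac12 Cb(r'(x)+r^2(x))g(x)\sim\tfrac12 C(\rho-1)(\rho-2)b\, g(x)/x^2$ with the wrong sign relative to what is wanted — more precisely the $C$-dependent piece of $\tfrac12 U_C''m_2$ combines with the $U_C'$ piece; the $\xi^3$ term contributes $\tfrac16 m_3 U_C'''(x)\sim\tfrac16 m_3(\rho-1)(\rho-2)g(x)/x^2$, using \eqref{3.moment} and uniform integrability from \eqref{moment_cond1} to replace $U_C'''(x+\theta\xi(x))$ by $U_C'''(x)$ in the limit; and the residual from $U_C'(x)m_1(x)$ together with the $O(1/x^{2+\delta})$ error is $o(g(x)/x^2)$.

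The main obstacle is bookkeeping the exact coefficient, in particular making sure that the $C$-dependence enters only through the $U_C'(x)m_1(x)+\tfrac12 U_C''(x)m_2(x)$ combination and produces exactly $-(\rho-1)b C/2$, while the $C$-independent part produces $(\rho-1)b/2 + \tfrac16 m_3(\rho-1)(\rho-2)/b\cdot b = (\rho-1)b C_0/2$ with $C_0=m_3(\rho-2)/3b$; one should double check this by noting that $Ce^{R}$ is exactly harmonic for the diffusion operator $\tfrac12 b(\cdot)''-\tfrac12 b\,r(\cdot)'$ up to the curvature term $\tfrac12 Cb(r'+r^2)g$, which is the source of the $-C$ coefficient. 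A secondary technical point is the treatment of the far tail $\{\xi(x)>Ax\}$: there Taylor expansion is useless and one instead bounds $U_C(x+\xi(x))\lesssim (x+\xi(x))^\rho\ell(x+\xi(x))\lesssim \xi^{\rho}(x)\cdot(\text{slowly varying})$ for $\xi(x)>Ax$, whence by Potter-type bounds and \eqref{cond.xi.U+2} the expectation $\E\{U_C(x+\xi(x));\xi(x)>Ax\}=O(x^{2\mu/b})\cdot(\text{s.v.})=o(g(x)/x^2)$ since $g(x)/x^2=x^{\rho-3}\ell(x)=x^{2\mu/b-2}\ell(x)$ dominates $x^{2\mu/b}$ — wait, it does not dominate; so here one must instead use that $\E\{\xi^{\rho+1+\delta}(x);\xi(x)>Ax\}=O(x^{\rho-1})$ gives, after dividing by the extra powers of $\xi$, the bound $\E\{\xi^\rho(x)\ell(\xi(x));\xi(x)>Ax\}=o(x^{\rho-2}\ell(x))$, which is what \eqref{cond.xi.U+2} is calibrated to deliver. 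I would carry out these five blocks in order: derivative computations, near-origin correction, bulk Taylor expansion with the $m_1,m_2$ cancellation, the $m_3$-term via \eqref{3.moment}, and finally the far-tail estimate via \eqref{cond.xi.U+2}; then assemble the coefficient $(\rho-1)b(C_0-C)/2$.
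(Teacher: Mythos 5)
Your overall route is the paper's: Taylor-expand the drift, use \eqref{r-cond.2} to cancel the terms of order $e^{R(x)}/x$, read the $m_3$-contribution off the third derivative, and dispose of the far tail via \eqref{cond.xi.U+2}. Two steps, however, would not survive as written. First, you differentiate $U_C$ three times: $U_C'''$ contains $C(e^{R})'''=C(r''+3rr'+r^3)e^{R}$, and your simplification $U_C'''=(r'+r^2+O(C/x^3))e^{R}$ silently assumes that $r''$ exists and is $O(1/x^3)$, whereas Theorem~\ref{thm:tail} only provides a once-differentiable $r$ with $r'(x)\sim-2\mu/(bx^2)$. The paper avoids this by splitting $U_C=U+Ce^{R}$ and expanding $U$ to third order (its third derivative is $(r'+r^2)e^{R}$, which needs only $r'$) while expanding $e^{R}$ only to second order; the second-order drift of $e^{R}$, namely $\bigl(r(x)m_1(x)+\tfrac12(r'(x)+r^2(x))m_2(x)\bigr)e^{R(x)}\sim-\tfrac{(\rho-1)b}{2}e^{R(x)}/x^2$, is exactly the source of the $-C$ coefficient. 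You should adopt that splitting rather than a single third-order expansion of $U_C$.

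Second, your coefficient bookkeeping is internally inconsistent: $\tfrac12U_C''(x)m_2(x)$ is of order $e^{R(x)}/x$, not $e^{R(x)}/x^2$, and the identity you propose to verify, ``$(\rho-1)b/2+\tfrac16 m_3(\rho-1)(\rho-2)=(\rho-1)bC_0/2$'', is false; the point of \eqref{r-cond.2} is that the $C$-independent combination $m_1(x)+\tfrac12 r(x)m_2(x)$ is $O(1/x^{2+\delta})$, so no $(\rho-1)b/2$ term survives there --- only $\tfrac16(\rho-1)(\rho-2)m_3$, which does equal $(\rho-1)bC_0/2$. Relatedly, a single cut at $Ax$ is not enough to pin the exact $m_3$ constant: the replacement of $U'''(x+\theta\xi(x))$ by $U'''(x)$ is then only $O(e^{R(x)}/x^2)$; you need the additional truncation $|\xi(x)|\le\varepsilon x$ with $\varepsilon\downarrow0$, the range $\varepsilon x<|\xi(x)|\le Ax$ being $O(e^{R(x)}/x^{2+\delta})$ by \eqref{moment_cond1} and regular variation, exactly as in the paper. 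Your far-tail estimate, after your self-correction, is the paper's (note that \eqref{cond.xi.U+2} controls the moment of order $2\mu/b+3+\delta=\rho+2+\delta$, not $\rho+1+\delta$, and the event $\{x+\xi(x)\le x_0\}$ contributes $O(U(x))\P\{\xi(x)\le-\gamma x\}$, still negligible). With the $U$/$e^{R}$ splitting and the $\varepsilon x$-cut inserted, your plan coincides with the paper's proof.
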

\begin{proof}
We start with the following decomposition:
\begin{eqnarray}\label{L.harm2.1}
\E U(x+\xi(x))-U(x)
&=& \E\{U(x+\xi(x))-U(x); |\xi(x)|\le \varepsilon x\}\nonumber\\
&&\hspace{2mm}+\E\{U(x+\xi(x))-U(x);\varepsilon x\le\xi(x)\le A x\}\nonumber\\
&&\hspace{7mm}+\E\{U(x+\xi(x))-U(x);\xi(x)>A x\}\nonumber\\
&&\hspace{10mm}+\E\{U(x+\xi(x))-U(x);\xi(x)<-\varepsilon x\}\nonumber\\
&=:& E_1+E_2+E_3+E_4.
\end{eqnarray}
The second and forth terms on the right may be bounded as follows:
\begin{eqnarray}\label{L.harm2.1.2a}
E_2+E_4 &\le& U((1+A)x)\P\{|\xi(x)|>\varepsilon x\}\nonumber\\
&\le& c_1U(x)\P\{|\xi(x)|>\varepsilon x\}\nonumber\\
&=& o(U(x)/x^3)\quad\mbox{ as }x\to\infty,
\end{eqnarray}
by the regular variation of $U$ and by the condition \eqref{moment_cond1}.
For the third term we have
\begin{eqnarray}\label{L.harm2.1.2}
E_3 &\le& \E\{U((1/A+1)\xi(x));\xi(x)>A x\}\nonumber\\
&\le& c_1\E\{\xi^{2\mu/b+1+\delta/2}(x);\xi(x)>A x\}\nonumber\\
&\le& c_1(Ax)^{-2-\delta/2}\E\{\xi^{2\mu/b+3+\delta}(x);\xi(x)>A x\}\nonumber\\
&=& o(U(x)/x^3)\quad\mbox{ as }x\to\infty,
\end{eqnarray}
due to the regular variation of $U$ and \eqref{cond.xi.U+2}.
To estimate the first term on the right side of \eqref{L.harm2.1},
we apply Taylor's formula:
\begin{eqnarray}\label{L.harm2.2}
E_1 &=& U'(x)\E\{\xi(x);|\xi(x)|\le \varepsilon x\}
+\frac{U''(x)}{2}\E\{\xi^2(x);|\xi(x)|\le \varepsilon x\}\nonumber\\
&&\hspace{40mm}+\frac{1}{6}\E\{U'''(x+\theta\xi(x))\xi^3(x);|\xi(x)|\le \varepsilon x\}.
\end{eqnarray}
where $0\le\theta=\theta(x,\xi(x))\le 1$.
By the construction of $U$ and the condition \eqref{r-cond.2},
\begin{eqnarray}\label{L.harm2.2.1}
U'(x)m_1(x)+\frac{U''(x)}{2}m_2(x)
&=& \frac{m_2(x)e^{R(x)}}{2}
\Bigl(\frac{2m_1(x)}{m_2(x)}+r(x)\Bigr)\nonumber\\
&=& O(e^{R(x)}/x^{2+\delta}).
\end{eqnarray}
Notice that
$$
\left|m_1(x)-\E\{\xi(x);|\xi(x)|\le \varepsilon x\}\right|
\le c_2\E|\xi(x)|^{3+\delta}/x^{2+\delta},
$$
and
$$
0\le m_2(x)-\E\{\xi^2(x);|\xi(x)|\le \varepsilon x\}
\le c_2\E|\xi(x)|^{3+\delta}/x^{1+\delta}.
$$
Applying now the condition \eqref{moment_cond1}, the relations
\eqref{L.harm2.2.1}, $U'(x)=e^{R(x)}$ and $U''(x)=O(e^{R(x)}/x)$,
we obtain
\begin{eqnarray}\label{L.harm2.2.2}
U'(x)\E\{\xi(x);|\xi(x)|\le \varepsilon x\}
+\frac{U''(x)}{2}\E\{\xi^2(x);|\xi(x)|\le \varepsilon x\}
&=& o(e^{R(x)}/x^2).
\end{eqnarray}

We next note that (\ref{3.moment}), our assumptions on $r(x)$
and the convergence
$$
\left|\E\{\xi^3(x);|\xi(x)|\le \varepsilon x\}-\E\xi^3(x)\right|
\to 0\quad\mbox{ as }x\to\infty,
$$
imply that
\begin{eqnarray}\label{3.deriv}
U'''(x)\E\{\xi^3(x);|\xi(x)|\le \varepsilon x\}
&=& (r^2(x)+r'(x))e^{R(x)}(\E\xi^3(x)+o(1))\nonumber\\
&=& ((\rho-1)(\rho-2)m_3+o(1))e^{R(x)}/x^2,
\end{eqnarray}
and
\begin{eqnarray}\label{3.deriv.2}
|\E\{(U'''(x+\theta\xi(x))-U'''(x))\xi^3(x);|\xi(x)|\le \varepsilon x\}|
&\le& c_3\varepsilon e^{R(x)}/x^2.
\end{eqnarray}

Substituting \eqref{L.harm2.2.2}, \eqref{3.deriv} and \eqref{3.deriv.2}
into \eqref{L.harm2.2} we get, for sufficiently large $x$,
\begin{eqnarray}\label{L.harm2.3}
\Bigl|E_1-\frac{(\rho-1)(\rho-2)}{6}m_3 e^{R(x)}/x^2\Bigr|
&\le& (c_3+1)\varepsilon e^{R(x)}/x^2.
\end{eqnarray}
It its turn, \eqref{L.harm2.1.2} and \eqref{L.harm2.3}
being implemented in \eqref{L.harm2.1} lead to
\begin{equation}\label{final1}
\E U(x+\xi(x))-U(x)
=\frac{(\rho-1)(\rho-2)m_3}{6}e^{R(x)}/x^2+o(e^{R(x)}/x^2),
\end{equation}
since $\varepsilon>0$ may be chosen as small as we please.

Applying similar arguments to the function $e^{R(x)}$, we get
\begin{equation}
\label{final2}
\E e^{R(x+\xi(x))}-e^{R(x)}
=-\frac{(\rho-1)b}{2}e^{R(x)}/x^2+o(e^{R(x)}/x^2).
\end{equation}
Combining \eqref{final1} and \eqref{final2} we arrive at
$$
\E U_C(x+\xi(x))-U_C(x)
=\frac{\rho-1}{2}((\rho-2)m_3/3-bC+o(1))
e^{R(x)}/x^2\quad\mbox{ as }x\to\infty,
$$
which completes the proof of the lemma.
\end{proof}

\begin{lemma}\label{L.harm.3}
Under the conditions of Theorem {\rm\ref{thm:tail}}, the harmonic
function $V$ generated by $U$ possesses the following decomposition:
$$
V(x)=U(x)+C_0e^{R(x)}+o(e^{R(x)})\quad\mbox{ as }x\to\infty.
$$
In particular, $V(x)>0$ ultimately in $x$.
\end{lemma}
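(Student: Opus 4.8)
The plan is to apply Lemma~\ref{l:V.harm} to the function $U_{C_0}$ with the particular constant $C_0=m_3(\rho-2)/3b$ coming from Lemma~\ref{L.harm2}, and then compare the resulting harmonic function with $V$ using Lemma~\ref{l:VUUtilde}. First I would verify that the mean drift of $U_{C_0}$ is summable against $\tau_B$ in the sense of \eqref{Ex.for.sum.tau}. By Lemma~\ref{L.harm2}, the drift of $U_{C_0}$ at $x$ is $o(e^{R(x)}/x^2)=o(x^{\rho-3}\ell(x))$; since $\rho>2$ this is dominated (up to constants) by $x^{\rho-2}$, and the integrability of $\sum_{n<\tau_B}(u(X_n))^+$ will follow from the upper bound \eqref{up.bound} on the invariant tail together with the identity $\E_x\sum_{n<\tau_B}f(X_n)=\int f\,d\pi/\pi(B)\cdot(\text{const})$ for the tail-integrated version — more precisely, one controls $\E_x\sum_{n<\tau_B}|X_n|^{\rho-2+\eta}$ for small $\eta>0$ using the moment results of \cite{Kor11}, since $\rho-2+\eta<2\mu/b-1$ fails — so instead I would use that $\E_x\tau_B<\infty$ combined with a coarse polynomial bound on the drift obtained from the $(3+\delta)$-moment assumption \eqref{moment_cond1}, which makes $(u(X_n))^+$ bounded by a constant times $(1+X_n)^{\rho-3}$, an integrable-against-$\pi$ quantity when $\rho-3<2\mu/b-1$, i.e. always.

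Once \eqref{Ex.for.sum.tau} is checked, Lemma~\ref{l:V.harm} produces the harmonic function
$$
\widetilde V(x)=U_{C_0}(x)+\E_x\sum_{n=0}^{\tau_B-1}u_{C_0}(X_n),
$$
where $u_{C_0}$ is the drift of $U_{C_0}$. The key estimate is that the correction term is $o(e^{R(x)})$ as $x\to\infty$: this is where the main work lies. Writing $u_{C_0}(z)=o(e^{R(z)}/z^2)$ from Lemma~\ref{L.harm2}, I would split the sum according to whether $X_n$ is large or small. For the range $X_n\le$ (a slowly growing function of $x$), local boundedness of $u_{C_0}$ plus finiteness of $\E_x\tau_B$ gives a bound $o(e^{R(x)})$ after using that $e^{R(x)}=x^{\rho-1}\ell(x)\to\infty$. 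For the range where $X_n$ is comparable to or larger than $x$, one uses that before the chain returns to $B$ it cannot have wandered up arbitrarily far without cost; quantitatively, $\E_x[\sup_{n<\tau_B}g(X_n)]$ can be controlled via an optional-stopping argument on a suitable supermartingale (e.g. a power of $X_n$), exploiting the negative drift $2xm_1(x)+m_2(x)<-\varepsilon$ away from $B$. The upshot should be $\E_x\sum_{n<\tau_B}|u_{C_0}(X_n)|=o(e^{R(x)})$, giving $\widetilde V(x)=U(x)+C_0e^{R(x)}+o(e^{R(x)})$.

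Finally I would invoke Lemma~\ref{l:VUUtilde}: the harmonic function $V$ of the statement is generated by $U=U_0$, and $\widetilde V$ is generated by $U_{C_0}$; but $U_{C_0}(x)=U(x)+C_0e^{R(x)}=U(x)(1+o(1))$ since $e^{R(x)}=o(U(x))$ (because $U(x)\sim xe^{R(x)}/\rho$), so $U_{C_0}\sim U$ at infinity, both are locally bounded, zero on $B$, positive off $B$ (the latter for $U_{C_0}$ holding ultimately, and one extends $C$ downward or simply restricts attention to large $x$ where positivity is automatic). Hence $V=\widetilde V$ everywhere, which yields the claimed expansion $V(x)=U(x)+C_0e^{R(x)}+o(e^{R(x)})$. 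Since $U(x)\sim x^\rho\ell(x)/\rho\to\infty$ dominates the $C_0e^{R(x)}=O(x^{\rho-1}\ell(x))$ term, positivity of $V$ for all large $x$ is immediate. The main obstacle I anticipate is the uniform control of the tail of the sum $\E_x\sum_{n<\tau_B}|u_{C_0}(X_n)|$ over the excursion away from $B$ — getting the $o(e^{R(x)})$ rather than just $O(e^{R(x)})$ — which requires a careful supermartingale estimate on how far the killed chain travels, and this is precisely where the quantitative escape lemmas of Section~\ref{sec:escape} (applied in the time-reversed or killed setting) are needed.
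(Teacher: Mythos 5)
Your overall strategy---apply Lemma~\ref{l:V.harm} directly to $U_{C_0}$ and then identify the resulting harmonic function with $V$ via Lemma~\ref{l:VUUtilde}---is sound in outline, but its two load-bearing steps are not actually established. First, your verification of \eqref{Ex.for.sum.tau} for $U_{C_0}$ does not work as stated: the identity \eqref{pi-def} controls $\int_B\pi(dz)\,\E_z\sum_{n<\tau_B}f(X_n)$, i.e.\ an average over starting points \emph{in} $B$, whereas \eqref{Ex.for.sum.tau} must hold for every $x$; and ``$f$ is $\pi$-integrable'' combined with ``$\E_x\tau_B<\infty$'' does not bound $\E_x\sum_{n<\tau_B}f(X_n)$ for a fixed large $x$. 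Note also that Lemma~\ref{L.harm2} gives only $u_{C_0}(x)=o(e^{R(x)}/x^2)$ with no sign information, so $(u_{C_0})^+$ may genuinely be of order $e^{R(x)}/x^2$ at infinity, and its expected sum over an excursion is exactly the quantity \eqref{finite.CC} which the paper has to work to obtain---it is not free.

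Second, and more seriously, the central claim $\E_x\sum_{n<\tau_B}|u_{C_0}(X_n)|=o(e^{R(x)})$ is only gestured at. The bound ``local boundedness plus finiteness of $\E_x\tau_B$'' is too crude: from a large starting point, $\E_x\tau_B\asymp x^2$, which is not $o(e^{R(x)})=o(x^{\rho-1}\ell(x))$ when $\rho\le3$ (i.e.\ $2\mu/b\le2$); and the escape lemmas of Section~\ref{sec:escape} concern the transient chain with drift $+\mu/x$ obtained \emph{after} the change of measure, so invoking them ``in the time-reversed or killed setting'' is not available. What your route really needs is an occupation (Green-function type) bound $\E_x\sum_{n<\tau_B}e^{R(X_n)}/X_n^2=O(e^{R(x)})$, which you never produce; it could be supplied by a Dynkin/optional-stopping argument with the test function $e^{R(\cdot)}$ itself, whose drift is $\sim-\tfrac{(\rho-1)b}{2}e^{R(x)}/x^2$ by \eqref{final2}, but that argument is the missing core of the proof. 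The paper avoids all of this with a different idea: it perturbs the constant to $C=C_0\pm\varepsilon$. For $C_0+\varepsilon$ the drift $u_C$ is eventually negative, so \eqref{Ex.for.sum.tau} is trivial, the correction term is bounded by a constant, and $V\le U+(C_0+\varepsilon)e^{R(x)}+O(1)$; nonnegativity of $V$ (Lemma~\ref{l:V.harm}) then yields \eqref{finite.CC} for free, which legitimises the choice $C_0-\varepsilon$ and gives the matching lower bound, and letting $\varepsilon\downarrow0$ produces the $o(e^{R(x)})$ term with no excursion estimates at all. As written, your proof has a genuine gap precisely where this trick (or an explicit occupation bound) is required.
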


\begin{proof}
Fix $\varepsilon>0$ and take $C:=C_0+\varepsilon$.
According to Lemma~\ref{L.harm2},
$$
u_C(x):=\E U_C(x+\xi(x))-U_C(x)
=(-(\rho-1)b\varepsilon/2+o(1))e^{R(x)}/x^2.
$$
Therefore, there exist $c_1<\infty$ and $x_1>x_0$ such that
$$
u_C(x)\le
\left\{
\begin{array}{ll}
c_1&\mbox{ if }x\le x_1,\\
0&\mbox{ if }x>x_1.
\end{array}
\right.
$$
Hence,
\begin{eqnarray*}
\E_x\sum_{n=0}^{\tau_B-1} u_C(X_n)
&\le& c_1\E_x\sum_{n=0}^{\tau_B-1}\I\{X_n\le x_1\}\\
&\le& c_1\sup_{x\le x_1}\E_x\tau_B=:c_2<\infty.
\end{eqnarray*}
Since $U_C(x)\sim U(x)$ as $x\to\infty$, by Lemma \ref{l:VUUtilde}
\begin{eqnarray*}
V(x) &=& U_C(x)+\E_x\sum_{n=0}^{\tau_B-1} u_C(X_n)\\
&\le& U_C(x)+c_2\\
&=& U(x)+(C_0+\varepsilon)e^{R(x)}+c_2.
\end{eqnarray*}
The arbitrary choice of $\varepsilon>0$ yields
\begin{eqnarray*}
V(x) &\le& U(x)+(C_0+o(1))e^{R(x)}\quad\mbox{ as }x\to\infty.
\end{eqnarray*}
Since $V\ge 0$,
\begin{equation}\label{finite.CC}
\E_x\sum_{n=0}^{\tau_B-1} e^{R(X_n)}/X_n^2<\infty
\end{equation}
for every $x$ because
$$
\E_x\sum_{n=0}^{\tau_B-1} u_C(X_n) \ge -U_C(x)>-\infty.
$$

Now take $C:=C_0-\varepsilon$. Again by Lemma~\ref{L.harm2},
$$
u_C(x):=\E U_C(x+\xi(x))-U_C(x)
=((\rho-1)b\varepsilon/2+o(1))e^{R(x)}/x^2,
$$
and the condition \eqref{Ex.for.sum.tau} holds due to
\eqref{finite.CC}. Then symmetric arguments lead to the lower bound
\begin{eqnarray*}
V(x) &\ge& U(x)+(C_0+o(1))e^{R(x)}\quad\mbox{ as }x\to\infty.
\end{eqnarray*}
Combining altogether we get the stated decomposition for $V(x)$.
\end{proof}

Having the harmonic function $V$ generated by $U$
we can define a new Markov chain $\widehat X_n$
on $\Rp$ with the following transition kernel
$$
\P_z\{\widehat X_1\in dy\}=\frac{V(y)}{V(z)}\P_z\{X_1\in dy;\tau_B>1\}
$$
if $V(z)>0$ and $\P_z\{\widehat X_1\in dy\}$ being arbitrary defined
if $V(z)=0$. Since $V$ is harmonic, then we also have
\begin{align}\label{connection}
\P_z\{\widehat X_n\in dy\}=\frac{V(y)}{V(z)}\P_z\{X_n\in dy;\tau_B>n\}
\ \mbox{ for all }n.
\end{align}

As well-known (see, e.g. \cite[Theorem 10.4.9]{MT}) the invariant measure
$\pi$ possesses the equality
\begin{equation}\label{pi-def}
\pi(dy)=\int_B\pi(dz)\sum_{n=0}^\infty \P_z\{X_n\in dy;\tau_B>n\}.
\end{equation}
Combining (\ref{connection}) and (\ref{pi-def}), we get
\begin{eqnarray*}
\pi(dy) &=& \frac{1}{V(y)}\int_B\pi(dz)V(z)
\sum_{n=0}^\infty \P_z\{\widehat X_n\in dy\}\\
&=& \frac{\widehat H(dy)}{V(y)}\int_B\pi(dz)V(z),
\end{eqnarray*}
where $\widehat H$ is the renewal measure generated by
the chain $\widehat X_n$ with initial distribution
$$
\P\{\widehat X_0\in dz\}=\widehat c\pi(dz)V(z),\ z\in B
\quad\text{and }\widehat c:=\Bigl(\int_B\pi(dz)V(z)\Bigr)^{-1}.
$$
Therefore,
\begin{eqnarray*}
\pi(x,\infty) &=& \widehat c
\int_x^\infty\frac{1}{V(y)}d\widehat H(y)\\
&\sim& \widehat c
\int_x^\infty\frac{1}{U(y)}d\widehat H(y)\ \mbox{ as }x\to\infty,
\end{eqnarray*}
since $V(x)\sim U(x)$ owing to Lemma \ref{L.harm2}.
After integration by parts we deduce
\begin{eqnarray}\label{repres}
\pi(x,\infty) &\sim& \widehat c\Bigl(
-\frac{\widehat H(x)}{U(x)}+\int_x^\infty\frac{\widehat H(y)U'(y)}{U^2(y)}dy\Bigr)\nonumber\\
&\sim& \widehat c\Bigl(
-\frac{\widehat H(x)}{U(x)}+\rho\int_x^\infty\frac{\widehat H(y)}{yU(y)}dy\Bigr)
\ \mbox{ as }x\to\infty.
\end{eqnarray}

In order to apply Theorem \ref{thm:renewal}
to the chain $\widehat X_n$, we have to show that its jumps
$\widehat\xi(x)$ satisfy the corresponding conditions.
By the construction, the absolute moments of order $2+\delta/2$
of $\widehat\xi(x)$ are uniformly bounded, because
\begin{eqnarray*}
\E|\widehat\xi(x)|^{2+\delta/2}
&=& \frac{1}{V(x)}\E|\xi(x)|^{2+\delta/2}V(x+\xi(x))\\
&=& \frac{1}{V(x)}(\E\{|\xi(x)|^{2+\delta/2}V(x+\xi(x));\xi(x)\le Ax\}\\
& &\hspace{1cm}+\E\{|\xi(x)|^{2+\delta/2}V(x+\xi(x));\xi(x)>Ax\})\\
&\le& \frac{V((1+A)x)}{V(x)}\E|\xi(x)|^{2+\delta/2}\\
&&\hspace{20mm}+\frac{1}{V(x)}
\E\{|\xi(x)|^{2+\delta/2}V((1+1/A)\xi(x));\xi(x)>Ax\},
\end{eqnarray*}
where $A$ is from the condition \eqref{cond.xi.U+2}.
Here the first term on the right side is bounded due to the condition
\eqref{moment_cond1} and regular variation of $V$ with index
$\rho$ and the second one is bounded by \eqref{cond.xi.U+2},
because
\begin{eqnarray*}
\E\{|\xi(x)|^{2+\delta/2}V((1+1/A)\xi(x));\xi(x)>Ax\}
&\le& \frac{c_4}{x^{\delta/4}}\E\{|\xi(x)|^{2+\delta+\rho};\xi(x)>Ax\}\\
&\le& c_5 x^{\rho-1-\delta/4}=o(V(x)/x).
\end{eqnarray*}
Then, in particular, the condition \eqref{mom.cond.ui} of
existence of integrable majorant for the
squares of jumps $\widehat\xi(x)$ and the condition
\eqref{rec.4} follow. Also it implies that
\begin{equation}\label{exp6}
\lim_{x\to\infty}\E\widehat\xi^2(x)
=\lim_{x\to\infty}\E\xi^2(x)=b.
\end{equation}

Further, the boundedness of the moments of order $2+\delta/2$
of $\widehat\xi(x)$ yields that, for every $\varepsilon>0$,
\begin{eqnarray}\label{exp1}
\E\widehat\xi(x) &=&
\E\{\widehat\xi(x);|\widehat\xi(x)|\le\varepsilon x\}+o(1/x)\nonumber\\
&=&\frac{1}{V(x)}\E\{\xi(x)V(x+\xi(x));|\xi(x)|\le\varepsilon x\}
+o(1/x).
\end{eqnarray}
Fix $\varepsilon_1>0$. Recalling that, by Lemma \ref{L.harm.3},
the function $V(x)-U(x)\sim C_0e^{R(x)}$ is regularly varying
with index $\rho-1$, we may choose $\varepsilon>0$ so small that
\begin{eqnarray}\label{difference.VU}
|V(x+y)-U(x+y)-(V(x)-U(x))| &\le& \varepsilon_1 e^{R(x)}
\quad\mbox{ for all }|y|\le\varepsilon x.
\end{eqnarray}
Then
$$
\E\{\xi(x)(V(x+\xi(x))-V(x));|\xi(x)|\le\varepsilon x\}
$$
differs from
$$
\E\{\xi(x)(U(x+\xi(x))-U(x));|\xi(x)|\le\varepsilon x\}
$$
by the quantity not greater than $\varepsilon_1 e^{R(x)}\E|\xi(x)|$.
Using Taylor's formula and the relation
$$
\sup_{|y|\le x/2}U''(x+y)
=\sup_{|y|\le x/2}r(x+y)e^{R(x+y)}=O(e^{R(x)}/x),
$$
we get
\begin{eqnarray*}
& &\E\bigl\{\xi(x)(U(x+\xi(x))-U(x));|\xi(x)|\le\varepsilon x\bigr\}\\
& &\hspace{1cm}= U'(x)\E\{\xi^2(x);|\xi(x)|\le\varepsilon x\}
+O(e^{R(x)}/x).
\end{eqnarray*}
It follows now from the condition \eqref{moment_cond1}
that the asymptotics of truncated expectations of the first
and the second order coincide with that of full expectations.
Combining altogether and relations $V(x)\sim U(x)$ and
$U'(x)=e^{R(x)}\sim \rho U(x)/x$, we deduce that
$$
\limsup_{x\to\infty}\Bigl|
\frac{x}{V(x)}\E\{\xi(x)V(x+\xi(x));|\xi(x)|\le\varepsilon x\}
-(-\mu+\rho b)\Bigr| \le \varepsilon_1\rho\sup_x\E|\xi(x)|.
$$
Plugging this into (\ref{exp1}) and recalling that
$\rho=1+2\mu/b$, we conclude that
$$
\limsup_{x\to\infty}|x\E\widehat\xi(x)-(\mu+b)|
\le \varepsilon_1\rho\sup_x\E|\xi(x)|.
$$
Since $\varepsilon_1>0$ may be chosen as small as we please,
\begin{equation}\label{exp5}
x\E\widehat\xi(x) \to \mu+b\quad\mbox{ as }x\to\infty.
\end{equation}

Finally, check the condition \eqref{T.above.cond.3}
for the chain $\widehat X_n$. As already shown,
$$
2x\widehat m_1(x)+\widehat m_2(x)\to 2(\mu+b)+b=2\mu+3b>0
\ \mbox{ as }x\to\infty
$$
It allows us to choose $x_1>x_0$ so that $U(x_1)>0$,
$V(x)\ge U(x)/2$ for all $x>x_1$ (this is possible because
$V(x)\sim U(x)$) and
$$
\inf_{x>x_1}(2x\widehat m_1(x)+\widehat m_2(x))>0.
$$
Then the condition \eqref{T.above.cond.3} holds with $x_1$
instead of $x_0$. Indeed, by the construction,
$\widehat X_n>x_0$ for any $n\ge 1$ which implies
\begin{eqnarray*}
\widehat H_y(x_0)=\sum_{n=0}^\infty \P_y\{\widehat X_n\le x_0\} &\le& 1.
\end{eqnarray*}
Further, as follows from \eqref{harm.n} and increase of the
function $U$, for every $x>x_0$,
\begin{eqnarray*}
V(x)=\E_x\{V(X_n);\tau_B>n\}
&\ge& \frac{U(x)}{2}\P_x\{X_n>x_1,\tau_B>n\}\\
&\ge& \frac{U(x_1)}{2}\P_x\{X_n>x_1,\tau_B>n\}.
\end{eqnarray*}
The role of the condition \eqref{exersions} is just to be
applied here; it guarantees that
$$
\inf_{x>x_0}V(x)>0.
$$
Therefore, for every $y\in[x_0,x_1]$,
\begin{eqnarray*}
\widehat H_y(x_1)=\sum_{n=0}^\infty\P_y\{\widehat X_n\le x_1\}
&=& \frac{1}{V(y)}\sum_{n=0}^\infty
\int_{x_0}^{x_1}V(z)\P_y\{X_n\in dz,\tau_B>n\}\\
&\le& \frac{\sup_{x_0<z\le x_1}V(z)}{\inf_{y>x_0}V(y)}
\sum_{n=0}^\infty \P_y\{\tau_B>n\}\\
&=& c\E_y\tau_B,
\end{eqnarray*}
and the latter mean value is bounded in $y\in[x_0,x_1]$.

Now it is shown that $\widehat X_n$ satisfies all the conditions
of Theorem \ref{thm:renewal},
so that $\widehat X_n$ is transient and
$$
\widehat H(x)\sim\frac{x^2}{2(\mu+b)-b}=
\frac{x^2}{2\mu+b}\ \mbox{ as }x\to\infty.
$$
Substituting this equivalence into \eqref{repres}
where $U(x)$ is regularly varying with index $\rho$
we arrive at the following equivalence:
\begin{eqnarray*}
\pi(x,\infty) &\sim& \frac{2}{(2\mu+b)(\rho-2)}\frac{x^2}{U(x)}
\int_B\pi(dz)V(z)\\
&\sim& \frac{2\rho}{(2\mu+b)(\rho-2)}xe^{-R(x)}\int_B\pi(dz)V(z)
\ \mbox{ as }x\to\infty.
\end{eqnarray*}
The proof of Theorem \ref{thm:tail} is complete.

\section{Proof of Theorem \ref{thm:tail2}}

In present section we work with the same function $U$ as defined
in the previous section. Now we should again prove that the
corresponding harmonic function $V$ is ultimately positive and
that $V(x)\sim U(x)$ as $x\to\infty$. Since here we do not assume
convergence of the third moments of jumps, we need to modify
our approach for proving these properties.

As in the previous section, for every $C\in\R$,
define $U_C(x)=0$ on $[0,x_0]$ and
$$
U_C(x)=U(x)+Ce^{R(x)}\quad\mbox{ for }x>x_0.
$$

\begin{lemma}\label{L.harm2.2.left}
Assume the conditions of Theorem {\rm\ref{thm:tail2}} hold.
Then there exist constants $C_1$, $C_2\in\R$ such that,
for all sufficiently large $x$,
\begin{eqnarray*}
\E U_{C_1}(x+\xi(x))-U_{C_1}(x) &<& 0,\\
\E U_{C_2}(x+\xi(x))-U_{C_2}(x) &>& 0.
\end{eqnarray*}
\end{lemma}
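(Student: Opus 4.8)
The plan is to redo the drift computation of Lemma~\ref{L.harm2}, keeping track of the single place where the convergence assumption \eqref{3.moment} was used there. First I would decompose $\E U(x+\xi(x))-U(x)=E_1+E_2+E_3+E_4$ exactly as in \eqref{L.harm2.1}. Since now $\xi(x)\ge-1$, the event $\{\xi(x)<-\varepsilon x\}$ is empty once $x>1/\varepsilon$, so $E_4=0$ for all large $x$; and $E_2+E_3=o(e^{R(x)}/x^2)$ carries over verbatim, because \eqref{L.harm2.1.2a} and \eqref{L.harm2.1.2} use only the regular variation of $U$, the moment bound \eqref{moment_cond1}, and \eqref{cond.xi.U+2}. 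In the Taylor expansion \eqref{L.harm2.2} of $E_1$, the first- and second-order terms still contribute $o(e^{R(x)}/x^2)$ by \eqref{L.harm2.2.2} (which relies only on \eqref{r-cond.2} and \eqref{moment_cond1}), and the third-order remainder is still $O(\varepsilon e^{R(x)}/x^2)$ as in \eqref{3.deriv.2}. The only term that behaves differently is $\frac{1}{6}U'''(x)\E\{\xi^3(x);|\xi(x)|\le\varepsilon x\}$: here I would use $U'''(x)=(r'(x)+r^2(x))e^{R(x)}\sim\frac{(\rho-1)(\rho-2)}{x^2}e^{R(x)}$ together with $|\E\{\xi^3(x);|\xi(x)|\le\varepsilon x\}-\E\xi^3(x)|\le(\varepsilon x)^{-\delta}\E|\xi(x)|^{3+\delta}\to0$ and the fact that, by \eqref{moment_cond1}, $|\E\xi^3(x)|\le\E|\xi(x)|^3\le\bigl(\sup_x\E|\xi(x)|^{3+\delta}\bigr)^{3/(3+\delta)}=:K<\infty$. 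Letting $\varepsilon\downarrow0$ afterwards, this gives
\begin{equation*}
\E U_C(x+\xi(x))-U_C(x)=\Bigl(\frac{(\rho-1)(\rho-2)}{6}\E\xi^3(x)-\frac{(\rho-1)b}{2}C+o(1)\Bigr)\frac{e^{R(x)}}{x^2},
\end{equation*}
where the term $-\frac{(\rho-1)b}{2}Ce^{R(x)}/x^2$ is $C\bigl(\E e^{R(x+\xi(x))}-e^{R(x)}\bigr)$, handled exactly as in \eqref{final2}.

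With this drift formula in hand I would set $M_+:=\limsup_{x\to\infty}\E\xi^3(x)$ and $M_-:=\liminf_{x\to\infty}\E\xi^3(x)$, which lie in $[-K,K]$ and are therefore finite, and then take any $C_1>\frac{(\rho-2)M_+}{3b}$ and any $C_2<\frac{(\rho-2)M_-}{3b}$. Because the bracketed coefficient in the drift formula satisfies
\begin{equation*}
\limsup_{x\to\infty}\Bigl(\frac{(\rho-1)(\rho-2)}{6}\E\xi^3(x)-\frac{(\rho-1)b}{2}C_1\Bigr)=\frac{\rho-1}{2}\Bigl(\frac{(\rho-2)M_+}{3}-bC_1\Bigr)<0,
\end{equation*}
we obtain $\E U_{C_1}(x+\xi(x))-U_{C_1}(x)<0$ for all sufficiently large $x$; the symmetric computation with $M_-$ and $C_2$, where the $\liminf$ of the coefficient is strictly positive, gives $\E U_{C_2}(x+\xi(x))-U_{C_2}(x)>0$ for all sufficiently large $x$, which is the assertion.

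The only genuinely new difficulty relative to Lemma~\ref{L.harm2} is that $\E\xi^3(x)$ need no longer converge, so the error accounting must be uniform in this oscillating quantity; concretely, the $O(\varepsilon e^{R(x)}/x^2)$ remainder of the third-order Taylor term must be removed by sending $x\to\infty$ first and $\varepsilon\downarrow0$ afterwards — which is legitimate precisely because $\E\xi^3(x)$ stays bounded — rather than by extracting a single limiting constant as in \eqref{3.deriv}. Everything else is the argument of Lemma~\ref{L.harm2} reread with the constant $m_3$ replaced throughout by the bounded sequence $\E\xi^3(x)$; here the left-continuity $\xi(x)\ge-1$ is used only to make $E_4$ vanish identically for large $x$, and is expected to play a more substantial role later, in the analogue of Lemma~\ref{L.harm.3} and in the change of measure, where a convergence hypothesis on the negative jumps is no longer available.
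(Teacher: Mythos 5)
Your argument is correct and is essentially the paper's proof written out in detail: the paper likewise disposes of the lemma by observing that the computations of Lemma~\ref{L.harm2}, run without the third-moment limit, control $\E U(x+\xi(x))-U(x)$ at the order $e^{R(x)}/x^2$ (with coefficient bounded thanks to \eqref{moment_cond1}), and then combines this with \eqref{final2} to choose $C_1$, $C_2$ of sufficiently large modulus. If anything, your bookkeeping via $\limsup$ and $\liminf$ of $\E\xi^3(x)$ is more precise than the paper's one-line claim that the drift of $U$ is $o(e^{R(x)}/x^2)$, which without \eqref{3.moment} should read $O(e^{R(x)}/x^2)$ --- a harmless slip that your version quietly repairs.
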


\begin{proof}
As the calculations in Lemma \ref{L.harm2} show,
without the condition on the convergence of the third moments
of jumps we still have the relation
\begin{eqnarray*}
\E U(x+\xi(x))-U(x)
&=& o(e^{R(x)}/x^2),
\end{eqnarray*}
which together with \eqref{final2} concludes the proof.
\end{proof}

The only place where the condition that the chain if left
skip-free is utilised is the following result.

\begin{lemma}\label{L.harm2.3.left}
Under the conditions of Theorem {\rm\ref{thm:tail2}},
the increments of the harmonic function $V$ generated by $U$
satisfy the following bounds: for $y>0$,
\begin{eqnarray*}
U(x+y)-U(x)+C_2(e^{R(x+y)}-e^{R(x)})
&\le& V(x+y)-V(x)\\
&\le& U(x+y)-U(x)+C_1(e^{R(x+y)}-e^{R(x)})
\end{eqnarray*}
ultimately in $x$. In particular, $V(x)\sim U(x)$ as $x\to\infty$
and $V(x)>0$ ultimately in $x$.
\end{lemma}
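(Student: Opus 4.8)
The plan is to exploit left skip-freeness ($\xi(x)\ge-1$, chain on $\mathbb Z^+$) to sandwich the harmonic increments between those of the super- and sub-harmonic functions $U_{C_1}$ and $U_{C_2}$ from Lemma~\ref{L.harm2.2.left}. The key observation is that when $X_n$ is left-continuous, the first-passage level $X_{\tau_B}$ into $B=[0,x_0]$ is controlled: since jumps downward are by exactly one unit, $X_{\tau_B}=x_0$ is not guaranteed, but more importantly the chain cannot overshoot $B$ downward, so one can arrange $U_{C_i}$ to vanish precisely where needed. First I would observe that $U_{C_1}(x)=U(x)+C_1e^{R(x)}$ has negative drift ultimately, so it is superharmonic for the killed chain above some level; likewise $U_{C_2}$ is subharmonic. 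The standard optional-stopping argument applied to the killed chain then gives, for $x$ large,
\begin{eqnarray*}
\E_x\{U_{C_2}(X_n);\tau_B>n\} &\le& U_{C_2}(x)+(\text{correction near }x_0),\\
\E_x\{U_{C_1}(X_n);\tau_B>n\} &\ge& U_{C_1}(x)-(\text{correction near }x_0).
\end{eqnarray*}

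Next I would pass to the limit $n\to\infty$. From Lemma~\ref{l:V.harm} we know $V(x)=\lim_{N\to\infty}\E_x\{U(X_{N+1});\tau_B>N+1\}$ provided the drift sum converges, and the sandwich just obtained, combined with the fact that the drift of $U_{C_i}$ has a fixed sign above a finite level, shows the required integrability \eqref{Ex.for.sum.tau} holds for both $U_{C_1}$ and $U_{C_2}$ (the only contributions are from the finitely-visited compact $[x_0,x_1]$, bounded by $\sup_{x\le x_1}\E_x\tau_B<\infty$). Then Lemma~\ref{l:VUUtilde}, applied with the pair $U$ and $U_{C_i}$ (which are asymptotically equivalent since $e^{R(x)}=o(U(x))$ is false — in fact $U(x)\sim xe^{R(x)}/\rho$ so $e^{R(x)}=o(U(x))$ does hold), yields that the harmonic functions generated by $U$, $U_{C_1}$, $U_{C_2}$ all coincide. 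This identification is exactly what lets me transfer the sign of the drift of $U_{C_i}$ into a pointwise inequality on increments of $V$.

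The increment bound itself then comes from writing, for fixed $y>0$ and large $x$,
$$
V(x+y)-V(x)=\bigl(U_{C_i}(x+y)-U_{C_i}(x)\bigr)+\Bigl(\E_x\sum_{n=0}^{\tau_B-1}u_{C_i}(X_n)\Big|_{X_0=x+y}-\E_x\sum_{n=0}^{\tau_B-1}u_{C_i}(X_n)\Big|_{X_0=x}\Bigr),
$$
and showing the bracketed remainder is $o(e^{R(x)})$ — indeed $o(1)$ — because for $x,x+y$ both large the chain started from either point is, by left skip-freeness, forced to pass through $[x_0,x_1]$ before hitting $B$ in essentially the same way, so the two expected sums over the killed excursion differ only through the first entrance to $[x_0,x_1]$, and $u_{C_i}$ is bounded there while being summable. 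Here the skip-free structure is essential: it guarantees $X_{\tau_{[0,x_1]}}\le x_1$ (no downward overshoot), so that $\P_{x+y}\{X_{\tau_{[0,x_1]}}\in dz\}$ and $\P_x\{\cdot\}$ are comparable and the strong Markov property at $\tau_{[0,x_1]}$ reduces both expected sums to an integral of the \emph{same} function $z\mapsto\E_z\sum_{0}^{\tau_B-1}u_{C_i}$ against nearly identical laws.

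The main obstacle I expect is making the last step quantitative: I must show the difference of the two expected excursion sums is not merely bounded but $o(e^{R(x)})$ (and ideally small enough that it does not spoil $V(x)\sim U(x)$). Bounding $u_{C_i}(z)$ by a constant on $[x_0,x_1]$ and $0$ above gives a bound of the form $c\cdot\sup_{z\le x_1}\E_z\tau_B$ times the difference of hitting probabilities $\bigl|\P_{x+y}\{\tau_{[0,x_1]}<\tau_B\}-\P_x\{\cdots\}\bigr|$, and I need this difference $\to0$ as $x\to\infty$; for a transient-like excursion this should follow from the fact that, conditioned on eventually hitting $B$, the entrance distribution into $[x_0,x_1]$ stabilises — but verifying this cleanly, rather than just boundedness, is the delicate point. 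Once the increment bound is established, summing it from a fixed large $x_*$ and using $U(x+y)-U(x)+C_i(e^{R(x+y)}-e^{R(x)})\sim U(x+y)-U(x)\sim U(x)$ immediately gives $V(x)\sim U(x)$, hence $V(x)>0$ ultimately.
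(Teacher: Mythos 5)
Your setup (identifying the harmonic functions generated by $U$, $U_{C_1}$ and $U_{C_2}$ via Lemma~\ref{l:VUUtilde}, and exploiting the signs of the drifts $u_{C_i}$ from Lemma~\ref{L.harm2.2.left}) coincides with the paper, but the core of your increment argument has a genuine gap. You try to show that the bracketed remainder, the difference of the two expected excursion sums, is small ($o(e^{R(x)})$, even $o(1)$), by arguing that the chains started from $x+y$ and from $x$ enter $[x_0,x_1]$ with nearly identical laws. This is neither what is needed nor correct as stated: the two expected sums do not differ only through the entrance into $[x_0,x_1]$; they also differ by the contribution of the path segment of the chain started at $x+y$ before it first reaches level $x$, on which $u_{C_i}$ is nonzero (of order $e^{R(z)}/z^2$ at height $z$). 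For large $y$ this contribution need not be $o(e^{R(x)})$ at all, and you yourself concede that the stabilisation of entrance laws you invoke is unverified. The missing idea is that no smallness is required --- only a sign.

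The paper uses left skip-freeness exactly at this point: set $\tau_x:=\min\{n\ge1: X_n=x\}$; since the chain lives on $\mathbb{Z}^+$ and $\xi\ge-1$, a path started at $x+y$ must hit the level $x$ exactly before it can enter $B$, so $\tau_x<\tau_B$ a.s., and before $\tau_x$ the chain stays above $x$. The strong Markov property at $\tau_x$ gives
\begin{equation*}
\E_{x+y}\sum_{n=0}^{\tau_B-1}u_{C_1}(X_n)
=\E_{x+y}\sum_{n=0}^{\tau_x-1}u_{C_1}(X_n)
+\E_x\sum_{n=0}^{\tau_B-1}u_{C_1}(X_n),
\end{equation*}
so the remainder in your decomposition equals exactly $\E_{x+y}\sum_{n=0}^{\tau_x-1}u_{C_1}(X_n)$, which is $\le 0$ for all sufficiently large $x$ by Lemma~\ref{L.harm2.2.left} (and $\ge 0$ when $C_1$ is replaced by $C_2$). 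This yields both one-sided bounds directly, for every $y>0$, with no comparison of hitting distributions and no occupation-time estimates. The final step, $V(x)\sim U(x)$ and hence $V(x)>0$ ultimately, then follows as you indicate, since $e^{R(x)}=o(U(x))$.
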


\begin{proof}
Both functions $U_{C_1}$ and $U_{C_2}$ satisfy the conditions of
Lemma \ref{l:VUUtilde} by the same arguments as in Lemma \ref{L.harm.3}.

Let $y>0$. Given $X_0=x+y$, denote $\tau_x:=\min\{n\ge1:X_n=x\}$.
Since the chain is left skip-free, $\tau_x<\tau_B$.
Having in mind that $u_{C_1}(X_n)<0$ before this stopping time,
we get, by the Markov property,
\begin{eqnarray*}
V(x+y)-V(x) &=& U_{C_1}(x+y)-U_{C_1}(x)
+\E_{x+y}\sum_{n=0}^{\tau_B} u_{C_1}(X_n)
-\E_x\sum_{n=0}^{\tau_B} u_{C_1}(X_n)\\
&\le& U_{C_1}(x+y)-U_{C_1}(x),
\end{eqnarray*}
and similarly $V(x+y)-V(x) \ge U_{C_2}(x+y)-U_{C_2}(x)$,
which completes the proof.
\end{proof}

We are now able to compute the mean drift of the transformed chain $\widehat{X}_n$.
We may just repeate the arguments from the proof of Theorem \ref{thm:tail}
with the inequality
\begin{eqnarray*}
|V(x+y)-U(x+y)-(V(x)-U(x))| &\le&
\max\{|C_1|,|C_2|\}(e^{R(x+y)}-e^{R(x)})
\end{eqnarray*}
instead of \eqref{difference.VU}. As a result we see that \eqref{exp5} is valid
under the conditions of Theorem~\ref{thm:tail2}.

All other parts of the derivation of the asymptotics of $\pi(x,\infty)$ can be taken
from the proof of Theorem~\ref{thm:tail} without any change.

\end{document}